\tikzset{
	confetti/.style={
		rectangle,
		draw=black, very thick,
		minimum height=4em,
		minimum width=6em,
		inner sep=1pt,
		text centered,
	},
	fletxa/.style={
		draw=black, very thick, ->
	},
}
\theoremstyle{definition}
\newtheorem{theorem}{Theorem}[section]
\newtheorem{cor}[theorem]{Corollary}
\newtheorem{prop}[theorem]{Proposition}
\newtheorem{rem}[theorem]{Remark}
\newtheorem{definition}[theorem]{Definition}
\newcommand{\N}{\mathbb{N}}
\newcommand{\K}{\mathbb{K}}
\newcommand{\e}{\varepsilon}
\renewcommand{\leq}{\leqslant}
\renewcommand{\geq}{\geqslant}
\renewcommand{\geq}{\geqslant}
\begin{document}
	\title{On some local Bishop-Phelps-Bollob\'as properties}
	
\dedicatory{Dedicated to the memory of Professor Victor Lomonosov}
	
\author[Dantas]{Sheldon Dantas}
\address[Dantas]{Department of Mathematics, Faculty of Electrical Engineering, Czech Technical University in Prague, Technick\'a 2, 166 27 Prague 6, Czech Republic \newline
	\href{http://orcid.org/0000-0001-8117-3760}{ORCID: \texttt{0000-0001-8117-3760}  }}
\email{\texttt{gildashe@fel.cvut.cz}}

\author[Kim]{Sun Kwang Kim}
\address[Kim]{Department of Mathematics, Chungbuk National University, 1 Chungdae-ro, Seowon-Gu, Cheongju, Chungbuk 28644, Republic of Korea \newline
	\href{http://orcid.org/0000-0002-9402-2002}{ORCID: \texttt{0000-0002-9402-2002}  }}
\email{\texttt{skk@chungbuk.ac.kr}}

\author[Lee]{Han Ju Lee}
\address[Lee]{Department of Mathematics Education, Dongguk University - Seoul, 04620 (Seoul), Republic of Korea \newline
	\href{http://orcid.org/0000-0001-9523-2987}{ORCID: \texttt{0000-0001-9523-2987}  }
}
\email{\texttt{hanjulee@dongguk.edu}}

\author[Mazzitelli]{Martin Mazzitelli}
\address[Mazzitelli]{Universidad Nacional del Comahue, CONICET, Departamento de Matem\'atica, Facultad de Econom\'ia y Administraci\'on, Neuqu\'en, Argentina.}
\email{\texttt{martin.mazzitelli@crub.uncoma.edu.ar}}

	\begin{abstract} 
We continue a line of study initiated in \cite{D, DKLM} about some \emph{local versions} of Bishop-Phelps-Bollob\'as type properties for bounded linear operators. We introduce and focus our attention on two of these local properties, which we call  {\bf {\bf L}$_{p, o}$} and {\bf {\bf L}$_{o, p}$}, and we explore the relation between them and some geometric properties of the underlying spaces, such as spaces having strict convexity, local uniform rotundity, and property $\beta$ of Lindenstrauss. At the end of the paper, we present a diagram comparing all the existing Bishop-Phelps-Bollob\'as type properties with each other. Some open questions are left throughout the article.
	\end{abstract}

	\date{\today}
	
	\thanks{The second author is the corresponding author. The first author was supported by the project OPVVV CAAS CZ.02.1.01/0.0/0.0/16\_019/0000778. The second author was partially supported by Basic Science Research Program through the National Research Foundation of Korea(NRF) funded by the Ministry of Education, Science and Technology (NRF-2017R1C1B1002928). The third author was supported by the Research program through the National Research Foundation of Korea funded by the Ministry of Education, Science and Technology (NRF-2016R1D1A1B03934771). The fourth author was partially supported by CONICET PIP 11220130100329CO}
	
	\subjclass[2010]{Primary 46B04; Secondary  46B07, 46B20}
	\keywords{Banach space; norm attaining operators; Bishop-Phelps-Bollob\'{a}s property}

	\maketitle
	
	\thispagestyle{plain}

	\section{Introduction} \label{int}

One of the main results in the theory of norm attaining functions defined on Banach spaces was proved by Errett Bishop and Robert R. Phelps in \cite{BP1}. They showed that the set of all functionals which attain the maximum on a nonempty closed bounded convex subset $S$ of a real Banach space $X$ is norm dense in the dual space $X^*$. On the other hand, Victor Lomonosov presented in \cite{Lom} an example which shows that this statement cannot be extended to the complex case by constructing a closed bounded convex subset of some Banach space with no support points. Here, we are interested to study this result when $S$ is the closed unit ball, which simply says  that the set of all norm attaining functionals defined on a real or complex Banach space $X$ is dense in $X^*$ (see also \cite{BP}). We will refer this last statement as the Bishop-Phelps theorem.
Joram Lindenstrauss was the first mathematician who considered the vector valued case of the Bishop-Phelps theorem (see \cite{Lind}). He presented a counterexample which proves that this theorem is no longer valid for bounded linear operators in general. Nevertheless, he gave some necessary conditions to get a Bishop-Phelps type theorem for this class of functions. For instance, if the domain $X$ is a reflexive Banach space, then it is true that the set of all norm attaining operators from $X$ into any Banach space $Y$ is dense in the set of all operators from $X$ into $Y$. After Lindenstrauss,  a lot of attention has been paid on this topic. We refer to the survey paper \cite{acostasur} and the references therein for more information about denseness of norm attaining functions in various directions.

In \cite{Bol}, B\'ela Bollob\'as proved a stronger version of the Bishop-Phelps theorem, in such a way that  whenever a norm-one functional $x^*$ almost attains its norm at some norm-one point $x$, it is possible to find a new norm-one functional $y^*$ and a new norm-one point $y$ such that $y^*$ attains its norm at $y$, $y$ is close to $x$, and $y^*$ is close to $x^*$. Since the norm of a functional is defined as a supremum and we can always take some point such that a given functional almost attains its norm, Bollob\'as result says that in the Bishop-Phelps theorem one can control the distances between the involved points and functionals. This result is known nowadays as the Bishop-Phelps-Bollob\'as theorem.
Motivated by Lindenstrauss work, in 2008, Mar\'ia Acosta, Richard Aron, Domingo Garc\'ia, and Manuel Maestre initiated the study of  the Bishop-Phelps-Bollob\'as theorem in the vector-valued case (see \cite{AAGM}). They found conditions on Banach spaces $X$ and $Y$ in order to get a Bishop-Phelps-Bollob\'as type theorem for operators from $X$ into $Y$. For instance, they characterized those spaces $Y$ such that  the Bishop-Phelps-Bollob\'as theorem holds for operators from $\ell_1$ into $Y$.
After more than 10 years of \cite{AAGM}, there is a huge literature about this topic and we refer the reader to \cite{acos, AMS, ACKLM, CKMM, CC, KL} and the references therein for further information.
Many different variants of the Bishop-Phelps-Bollob\'as theorem were introduced during the last years. For some of them, we refer the recent papers \cite{D, DKL, DKKLM, DKKLM1}. Our aim is to study local versions of these properties, as in \cite{DKLM}. Before we explain exactly what this means, let us introduce some notation and necessary preliminaries.

We work on Banach spaces over the field $\K$, which can be the real or complex numbers. We denote by $S_X$, $B_X$, and $X^*$ the unit sphere, the unit ball, and the topological dual of $X$, respectively. The symbol $\mathcal{L}(X, Y)$ stands for the set of all bounded linear operators from $X$ into $Y$ and we say that $T \in \mathcal{L}(X, Y)$ attains its norm (or it is norm attaining) if there is $x_0 \in S_X$ such that $$\|T\| = \sup_{x \in S_X}\|T(x)\| = \|T(x_0)\|.$$ Following \cite{AAGM}, we say that a pair of Banach spaces $(X, Y)$ satisfies the Bishop-Phelps-Bollob\'as property ({\bf BPBp}, for short) if given $\e > 0$, there is $\eta(\e) > 0$ such that whenever $T \in \mathcal{L}(X, Y)$ with $\|T\| = 1$ and $x \in S_X$ are such that $$\|T(x)\| > 1 - \eta(\e),$$ there are $S \in \mathcal{L}(X, Y)$ with $\|S\| = 1$ and $x_0 \in S_X$ such that 
$$
\|S(x_0)\| = 1,\quad \|x_0 - x\| < \e, \quad \text{and} \quad \|S - T\| < \e. 
$$
When $x_0 = x$ in the previous definition, we say that $(X, Y)$ has the Bishop-Phelps-Bollob\'as point property ({\bf BPBpp}, for short); this property was defined and studied in \cite{DKL, DKKLM}.
If instead of fixing the point $x$ (as in the {\bf BPBpp}) we fix the operator $T$, we say that $(X,Y)$ has the Bishop-Phelps-Bollob\'as operator property (see \cite{D, DKKLM1}). That is, $(X,Y)$ has the Bishop-Phelps-Bollob\'as operator property ({\bf BPBop}, for short) if given $\e > 0$, there is $\eta(\e) > 0$ such that whenever $T \in \mathcal{L}(X, Y)$ with $\|T\| = 1$ and $x_0 \in S_X$ are such that $\|T(x_0)\| > 1 - \eta(\e)$, there is $x_1 \in S_X$ such that  
$$
\|T(x_1)\| = 1\quad \text{and} \quad \|x_0 - x_1\| < \e. 
$$
Notice that the {\bf BPBp}, {\bf BPBpp}, and {\bf BPBop} are uniform properties in the sense that $\eta$ depends just on a given $\e > 0$. As we already mentioned before, we are interested to study the situations when $\eta$ depends not only $\e$, but also on the vector $x$ or the operator $T$. Some of them were already studied by the authors of the present paper in \cite{DKLM} and here we are using a similar notation.
We state now the definition of the two local properties on which we will focus. 
\begin{definition} \label{def} {\bf (a)} A pair $(X, Y)$ has the {\bf {\bf L}$_{p, o}$} if given $\e > 0$ and $T \in \mathcal{L}(X, Y)$ with $\|T\| = 1$, then there is $\eta(\e, T) > 0$ such that whenever $x \in S_X$ satisfies
\begin{equation*}
\|T (x)\| > 1 - \eta(\e, T),
\end{equation*}
there is $S \in \mathcal{L}(X, Y)$ with $\|S\| = 1$ such that
\begin{equation*}
\|S(x)\| = 1 \ \ \ \mbox{and} \ \ \ \|S - T\| < \e.
\end{equation*}
{\bf (b)} A pair $(X, Y)$ has the {\bf {\bf L}$_{o, p}$} if given $\e > 0$ and $x \in S_X$, then there is $\eta(\e, x) > 0$ such that whenever $T \in \mathcal{L}(X, Y)$ with $\|T\| = 1$ satisfies
\begin{equation*}
\|T(x)\| > 1 - \eta(\e, x),
\end{equation*}
there is $x_0 \in S_X$ such that
\begin{equation*}
\|T (x_0)\| = 1 \ \ \ \mbox{and} \ \ \ \|x_0 - x\| < \e.
\end{equation*}
\end{definition}
Let us clarify the notation: in the symbol {\bf {\bf L}$_{\square, \triangle}$}, both $\square$ and $\triangle$ can be $p$ or $o$, which are the initials of the words point and operator, respectively. If $(X, Y)$ has the {\bf {\bf L}$_{\square, \triangle}$}, then it means that we fix $\square$ and $\eta$ depends on $\triangle$. So, for instance, in {\bf {\bf L}$_{p, o}$} we fix a norm-one point $x$ and $\eta$ depends on a norm-one operator $T$. In \cite{DKLM}, properties {\bf {\bf L}$_{p, p}$} and {\bf {\bf L}$_{o, o}$} were addressed. Both of them are deeply related to geometric properties of the involved Banach spaces as local uniform rotundity or some of the Kadec-Klee properties. In fact, it turns out that the {\bf {\bf L}$_{p, p}$} for linear functionals defined on a Banach space $X$ is equivalent to the strong subdifferentiability of the norm of $X$ (see \cite[Theorem 2.3]{DKLM}). It is also a straightforward observation that if $X$ is reflexive then {\bf {\bf L}$_{o, o}$} is dual to {\bf {\bf L}$_{p, p}$} in the sense that $(X,\mathbb{K})$ has the {\bf {\bf L}$_{o, o}$} if and only if $(X^*,\mathbb{K})$ has the {\bf {\bf L}$_{p, p}$} (see \cite[Proposition~2.2]{DKLM}). 
Here, we are interested to give continuity in the study of these type of properties, but now investigating {\bf {\bf L}$_{p, o}$} and {\bf {\bf L}$_{o, p}$}.

We describe now the contents of this paper. In first place, we obtain sufficient and necessary conditions for a pair $(X,\mathbb{K})$ to have the {\bf {\bf L}$_{o, p}$}, in terms of some \emph{rotundity} properties of $X$. Specifically,  we prove that 
\begin{equation}\label{necess and suff}
\text{if $X$ is reflexive,}\quad\text{$X$ is LUR} \Rightarrow \text{ $(X, \K)$ has the {\bf L}$_{o, p}$} \Rightarrow \text{$X$ is strictly convex}.
\end{equation}
We also prove that there exists a dual relation between properties {\bf {\bf L}$_{p, o}$} and {\bf {\bf L}$_{o, p}$} in the functional case and, as a consequence, we get that if $X$ is reflexive and $X^*$ is locally uniformly rotund, then the pair $(X, \K)$ satisfies the {\bf {\bf L}$_{p, o}$}. As a consequence of \eqref{necess and suff} and the dual relation between {\bf {\bf L}$_{p, o}$} and {\bf {\bf L}$_{o, p}$} we see that, even for $2$-dimensional spaces, there is a Banach space $X$ such that the pair $(X, \K)$ fails both properties. This establish a difference between  the local properties {\bf {\bf L}$_{p, o}$}, {\bf {\bf L}$_{o, p}$} and {\bf {\bf L}$_{p, p}$}, {\bf {\bf L}$_{o, o}$}, since the latter properties hold in the finite-dimensional case.
Concerning linear operators, we show that pairs of the form $(X, \ell_{\infty}^2)$ and $(Z, Z)$, where $\dim(X) \geq 2$ and $Z$ is a $2$-dimensional space, fail property {\bf {\bf L}$_{o, p}$}. The situation with pairs like $(X, \ell_{\infty}^2)$ changes for property {\bf {\bf L}$_{p, o}$}: we prove that if $Y$ has property $\beta$ of Lindenstrauss with a finite index set $I$, then the pair $(X, Y)$ satisfies the {\bf {\bf L}$_{p, o}$} whenever $(X, \K)$ does. Nevertheless, this is no long true when $I$ is infinite and we present a counterexample to prove this. Finally, we show that $(\ell_1, Y)$ and $(c_0, Y)$ fail both properties for all Banach spaces $Y$. In the last part of the paper, we compare all these properties with each other and also with the {\bf BPBp}, {\bf BPBpp}, and {\bf BPBop}.

\section{The results}

In this section, we show the results we have for both properties {\bf {\bf L}$_{o, p}$} and {\bf {\bf L}$_{p, o}$}. We start by proving some positive results. Notice that it is clear that the {\bf BPBpp} implies the {\bf {\bf L}$_{p, o}$}. Hence, there are some immediate examples of pairs of Banach spaces $(X, Y)$ satisfying the {\bf {\bf L}$_{p, o}$} (see \cite{DKL, DKKLM} for positive results on the {\bf BPBpp}). It is also clear that the {\bf BPBop} implies the {\bf {\bf L}$_{o, p}$}, although this does not provide many examples, since the {\bf BPBop} holds only for the pairs $(\mathbb{K}, Y)$ for every Banach space $Y$ and $(X,\mathbb{K})$ for uniformly convex Banach spaces $X$ (see \cite{DKKLM, KL}).
Here, we get other examples of pairs $(X, Y)$ satisfying the properties {\bf {\bf L}$_{p, o}$} and {\bf {\bf L}$_{o, p}$} (see Proposition~\ref{17}, Corollary~\ref{18} and Theorems~\ref{betar} and \ref{comple}).
Recall that a Banach space is strictly convex if $\|\frac{x+y}{2}\| <1$ whenever $x,y \in S_X$, $x\neq y$, and that is 
locally uniformly rotund (LUR, for short) if for all $x, x_n \in S_X$, 
\begin{equation*}
\lim_n \|x_n + x\| = 2 \Longrightarrow \lim_n \|x_n - x\| = 0.
\end{equation*} 
It is a well-known fact that if $X$ is LUR, then is strictly convex. 

\begin{prop} \label{17} Let $X$ be a Banach space. 
\begin{enumerate}
\item[(i)] If $X$ is reflexive and LUR, then the pair $(X, \K)$ has the {\bf L}$_{o, p}$. 	
\item[(ii)] If $X$ has the Radon-Nikod\'ym property and $(X, \K)$ has the {\bf L}$_{o, p}$, then $X$ is strictly convex.
\end{enumerate}
\end{prop}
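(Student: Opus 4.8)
The plan is to prove the two items separately; (i) is quick and (ii) is where the real work lies.

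\emph{Item (i).} I would argue by contradiction. Fix $\e>0$ and $x\in S_X$ and suppose no $\eta(\e,x)$ works; then there are $x_n^*\in S_{X^*}$ with $|x_n^*(x)|>1-1/n$ such that every $y\in S_X$ with $|x_n^*(y)|=1$ has $\|y-x\|\geq\e$. After multiplying each $x_n^*$ by a scalar of modulus one (which does not change the set $\{y\in S_X:|x_n^*(y)|=1\}$) I may assume $x_n^*(x)\to 1$. Since $X$ is reflexive each $x_n^*$ attains its norm, so I can pick $x_n\in S_X$ with $x_n^*(x_n)=1$, and then $\|x_n-x\|\geq\e$ for all $n$ by assumption. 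But $2\geq\|x_n+x\|\geq|x_n^*(x_n+x)|=1+x_n^*(x)\to 2$, so $\|x_n+x\|\to 2$; since $x_n,x\in S_X$, the LUR of $X$ forces $\|x_n-x\|\to 0$, a contradiction. (Reflexivity enters only to guarantee norm attainment of functionals; LUR does the rest.)

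\emph{Item (ii).} I would prove the contrapositive: if $X$ is not strictly convex then $(X,\K)$ fails the {\bf L}$_{o,p}$. Pick $p\neq q$ in $S_X$ with $u:=\tfrac{p+q}{2}\in S_X$, so that $u$ is a non-extreme point of $B_X$; from $\|u\|=\|p\|=\|q\|=1$ and $p\neq q$ one checks that $p,q$, and hence $u$ and $p-q$, are linearly independent. Choose $u^*\in S_{X^*}$ with $u^*(u)=1$ (so automatically $u^*(p)=u^*(q)=1$) and $g\in X^*$ with $g(u)=0$, $g(p)=1$, $g(q)=-1$; put $M:=\|g\|\geq 1$. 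For small $t>0$ set $h_t:=u^*+tg$: since $h_t(u)=1$ and $h_t(p)=1+t$ we get $1+t\leq\|h_t\|\leq 1+tM$, so $\|h_t\|\to 1$ and $\tilde h_t:=h_t/\|h_t\|\in S_{X^*}$ satisfies $\tilde h_t(u)=1/\|h_t\|\to 1$. I would then show that, with $\e:=1/(4M)$, for all sufficiently small $t$ no norm-attaining point of $\tilde h_t$ lies within $\e$ of $u$, which makes the {\bf L}$_{o,p}$ fail at the point $u$.

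For that last step: if $\tilde h_t$ (equivalently $h_t$) does not attain its norm there is nothing to prove, so assume it does; its norm-attaining set is then $\bigcup_{|\mu|=1}\mu F_t$, where $F_t:=\{v\in S_X:h_t(v)=\|h_t\|\}$. For $v\in F_t$, from $\re u^*(v)=\|h_t\|-t\,\re g(v)\leq 1$ I get $\re g(v)\geq 1$, hence $\|v-u\|\geq|g(v)|/M\geq 1/M$; and from $1\geq|u^*(v)|\geq\re u^*(v)\geq 1-t(M-1)$ I get $|u^*(v)-1|\leq\rho(t):=t(M-1)+\sqrt{2t(M-1)}\to 0$. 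Thus if $\mu v$ with $v\in F_t$, $|\mu|=1$, were within $\e$ of $u$, then $|\mu u^*(v)-1|=|u^*(\mu v)-u^*(u)|<\e$, so $|\mu-1|\leq|u^*(v)-1|+|\mu u^*(v)-1|\leq\rho(t)+\e$, whence $\|\mu v-u\|\geq\|v-u\|-|\mu-1|\geq 1/M-\rho(t)-\e>\e$ as soon as $\rho(t)<1/(2M)$ — a contradiction. I expect this estimate to be the main obstacle: one must keep away from $u$ not merely the exposed face $F_t$ but the whole rotated family $\mu F_t$, and the bound on $|u^*(v)-1|$ is exactly what pins the relevant $\mu$ near $1$, reducing matters to $F_t$ itself. (Notably, this particular argument uses neither reflexivity nor the RNP of $X$; RNP is implied by reflexivity and so in any case covers the second implication in \eqref{necess and suff}.)
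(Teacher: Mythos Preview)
Your proof of (i) is essentially the paper's: both argue by contradiction, use reflexivity to pick norm-attaining points $x_n$ for the $x_n^*$, and then LUR to force $x_n\to x_0$.

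For (ii) your route is genuinely different from the paper's, and in fact stronger. The paper argues directly that $X$ is strictly convex: given $x,y\in S_X$ with $\|x-y\|\geq\e$, it invokes the Radon--Nikod\'ym property to know that the strongly exposing functionals $\Gamma$ are dense in $S_{X^*}$, then adapts the argument of \cite[Theorem~2.1]{KL} to show every $x^*\in\Gamma$ has either $\re x^*(x)$ or $\re x^*(y)$ bounded away from $1$ by a quantity depending on $\eta(\cdot,x)$ and $\eta(\cdot,y)$; density then gives $\left\|\tfrac{x+y}{2}\right\|<1$. Your argument instead proves the contrapositive by an explicit construction: at a non-extreme $u=\tfrac{p+q}{2}\in S_X$ you perturb a supporting functional $u^*$ in the direction of a separating functional $g$ to produce $\tilde h_t$ with $\tilde h_t(u)\to 1$ whose (possibly empty) norm-attaining set stays uniformly away from $u$. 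The estimates you give are correct --- in particular the identity $\mu-1=\mu(1-u^*(v))+(\mu u^*(v)-1)$ is exactly what handles the rotated faces $\mu F_t$. The payoff is that your proof never uses the RNP, so you actually establish the unconditional implication ``$(X,\K)$ has the {\bf L}$_{o,p}$ $\Rightarrow$ $X$ is strictly convex''; the paper's approach, by contrast, needs RNP to obtain a dense supply of strongly exposing functionals but in return yields a quantitative bound on $\left\|\tfrac{x+y}{2}\right\|$ in terms of the {\bf L}$_{o,p}$ function $\eta$.
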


\begin{proof}
\noindent (i).
Otherwise, there are $\e_0 > 0$ and $x_0 \in S_X$ such that for every $n \in \N$, there is $x_n^* \in S_{X^*}$ with
\begin{equation*}
1 \geq \left|x_n^*(x_0)\right| \geq 1 - \frac{1}{n}	
\end{equation*}
such that whenever $x \in S_X$ satisfies $\|x - x_0\| < \e_0$, we have that $|x_n^*(x)| < 1$. Since $X$ is reflexive, there is $x_n \in S_X$ such that $|x_n^*(x_n)| = 1$ for every $n \in \N$. For suitable modulus 1 constants $c_n$, we have that
\begin{equation*}
1 \geq \left\| \frac{c_nx_n + x_0}{2} \right\| \geq \left|\frac{ x_n^*(c_nx_n) + x_n^*(x_0)}{2} \right| \longrightarrow 1.
\end{equation*}
Since $X$ is LUR, we see that $\|c_nx_n - x_0\| \longrightarrow 0$ as $n \rightarrow \infty$. Then, we must have $|x_n^*(c_nx_n)| < 1$ for large enough $n$ and this is a contradiction.

\noindent (ii). Let $\e>0$ and $x,y \in S_X$ such that $\|x-y\|\geq \e$. We want to show that there is $\delta(\varepsilon,x,y)>0$ such that $\frac{\|x+y\|}{2}\leq 1-\delta(\varepsilon,x,y)$. Let $\Gamma$ be the set of all bounded linear functionals in $S_{X^*}$ that strongly expose $B_{X^*}$. Following the proof of \cite [Theorem~2.1]{KL} (with slight modifications) we get that each $x^*\in \Gamma$ satisfies either
$$
\text{Re}\, x^*(x)\leq 1-\min\left\{\eta\left(\frac{\e^2}{64},x\right), \eta\left(\frac{\e^2}{64},y\right), \frac{\e^2}{64}\right\}
$$
or 
$$
\text{Re}\, x^*(y)\leq 1-\min\left\{\eta\left(\frac{\e^2}{64},x\right), \eta\left(\frac{\e^2}{64},y\right), \frac{\e^2}{64}\right\}.
$$
where $\eta(\cdot,\cdot)$ is the function in the definition of {\bf L}$_{o, p}$. 	
Now, since $X$ has the Radon-Nikod\'ym property we have that $\Gamma$ is dense in $S_{X^*}$ (see \cite{Bou, Ste}) and, consequently,
\begin{eqnarray*}
\frac{\|x+y\|}{2}&=&\sup\left\{ \text{Re}\, \frac{x^*(x)+x^*(y)}{2}:\,\, x^*\in \Gamma\right\}\\
&\leq& \frac{2-\min\left\{\eta\left(\frac{\e^2}{64},x\right), \eta\left(\frac{\e^2}{64},y\right), \frac{\e^2}{64}\right\}}{2}\\
&=&1-\frac{1}{2}\min\left\{\eta\left(\frac{\e^2}{64},x\right), \eta\left(\frac{\e^2}{64},y\right), \frac{\e^2}{64}\right\}.
\end{eqnarray*}
Then, $\delta(\varepsilon,x,y)=\frac{1}{2}\min\left\{\eta\left(\frac{\e^2}{64},x\right), \eta\left(\frac{\e^2}{64},y\right), \frac{\e^2}{64}\right\}$.
\end{proof}

We do not know if reflexivity (or the Radon-Nikod\'ym property) is a necessary condition for the {\bf L}$_{o, p}$ in the above proposition. However, if we assume that $X$ is reflexive, we have the following consequence.

\begin{cor} \label{18} Let $X$ be a reflexive Banach space. 
\begin{enumerate}
\item[(i)] If $X^*$ is LUR, then the pair $(X, \K)$ has the {\bf L}$_{p, o}$.	
\item[(ii)] If $(X, \K)$ has the {\bf L}$_{p, o}$, then $X^*$ is strictly convex.
\end{enumerate}
\end{cor}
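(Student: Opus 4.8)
The plan is to deduce both statements from Proposition~\ref{17} together with a duality argument, exploiting the fact that for a reflexive space $X$ the pair $(X,\K)$ corresponds, via transposition of functionals, to the pair $(X^*,\K)$; this is the same kind of reflexive duality used in \cite[Proposition~2.2]{DKLM} to relate {\bf L}$_{o,o}$ and {\bf L}$_{p,p}$. Concretely, I would first establish the claim that, when $X$ is reflexive, $(X,\K)$ has the {\bf L}$_{p,o}$ if and only if $(X^*,\K)$ has the {\bf L}$_{o,p}$. For the ``if'' direction: fix $\e>0$ and $x\in S_X$; under the identification $X=X^{**}$, $x$ is a norm-one functional on $X^*$, so the {\bf L}$_{o,p}$ for $(X^*,\K)$ gives $\eta(\e,x)>0$ so that any $x^*\in S_{X^*}$ with $|x^*(x)|=|x(x^*)|>1-\eta(\e,x)$ can be moved, within distance $\e$ in $S_{X^*}$, to some $y^*\in S_{X^*}$ attaining its norm at $x$ (as a functional on $X^*$), i.e. $|y^*(x)|=1$. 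Interpreting $x^*$ and $y^*$ as bounded linear operators from $X$ into $\K$, this says exactly that $\|x^*\|=1$, $\|x^*(x)\|>1-\eta(\e,x)$, and there is $S=y^*\in S_{X^*}=\mathcal{L}(X,\K)$ with $\|S(x)\|=1$ and $\|S-x^*\|<\e$; since here $\eta$ depends on $\e$ and on $x$ only, and $x$ is our fixed point, this is precisely the {\bf L}$_{p,o}$ for $(X,\K)$ with the roles of ``point'' and ``operator'' matching up correctly. The reverse implication is the mirror image of the same computation.

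Once this equivalence is in hand, (i) and (ii) follow immediately by plugging in Proposition~\ref{17} applied to $X^*$ (which is again reflexive since $X$ is): if $X^*$ is LUR, then by Proposition~\ref{17}(i) the pair $(X^*,\K)$ has the {\bf L}$_{o,p}$, hence $(X,\K)$ has the {\bf L}$_{p,o}$, giving (i); and if $(X,\K)$ has the {\bf L}$_{p,o}$, then $(X^*,\K)$ has the {\bf L}$_{o,p}$, and since a reflexive space has the Radon--Nikod\'ym property, Proposition~\ref{17}(ii) yields that $X^*$ is strictly convex, giving (ii). Note that for (ii) we do not even need the full strength of Proposition~\ref{17}(ii); we just need that {\bf L}$_{o,p}$ for $(X^*,\K)$ forces strict convexity of $X^*$, and reflexivity already guarantees the RNP hypothesis.

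The main point requiring care is the bookkeeping in the duality step: one must check that the distance estimates translate without loss, i.e. that $\|S-T\|$ computed in $\mathcal{L}(X,\K)=X^*$ agrees with the norm distance of the corresponding functionals on $X^*$ (it does, by reflexivity and the isometry $X\hookrightarrow X^{**}$), and that the modulus-one-constant adjustments needed to pass between $|x^*(x)|=1$ and $x^*(x)=1$ do not disturb the $\e$-closeness. These are routine, but they are the only place where a genuine (if small) obstacle lies; everything else is a direct appeal to Proposition~\ref{17}. I would also remark explicitly that, unlike the finite-dimensional automatic validity of {\bf L}$_{p,p}$ and {\bf L}$_{o,o}$, combining Corollary~\ref{18}(ii) with Proposition~\ref{17}(ii) shows that a non-strictly-convex reflexive $X$ with non-strictly-convex dual---which exists already in dimension $2$---fails both {\bf L}$_{p,o}$ and {\bf L}$_{o,p}$, as announced in the introduction.
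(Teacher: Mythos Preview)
Your overall strategy---reduce both items to Proposition~\ref{17} via a duality between {\bf L}$_{p,o}$ for $(X,\K)$ and {\bf L}$_{o,p}$ for $(X^*,\K)$---is exactly what the paper does (this duality is isolated as Propositions~\ref{if} and~\ref{equivalence1}). However, the bookkeeping in your duality step, which you yourself flag as the delicate point, is inverted.

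To prove that $(X,\K)$ has the {\bf L}$_{p,o}$, one must fix $\e>0$ and an \emph{operator} $x^*\in S_{X^*}$, and then produce $\eta(\e,x^*)>0$; the point $x\in S_X$ is the free variable. Under the identification $X=X^{**}$, this $x^*$ is the fixed \emph{point} in $S_{X^*}$ to which the {\bf L}$_{o,p}$ hypothesis on $(X^*,\K)$ applies, giving $\eta(\e,x^*)$; then for any $x\in S_X$ (playing the role of the operator $\hat x\in S_{X^{**}}$) with $|x^*(x)|>1-\eta(\e,x^*)$ one obtains $y^*\in S_{X^*}$ with $|y^*(x)|=1$ and $\|y^*-x^*\|<\e$. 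In your write-up you instead fix $x\in S_X$ first and claim the {\bf L}$_{o,p}$ of $(X^*,\K)$ yields an $\eta$ depending on $x$; but that property gives an $\eta$ depending on the point of $S_{X^*}$, not on the element of $S_{X^{**}}$, and in any case an $\eta(\e,x)$ depending on the point is not what {\bf L}$_{p,o}$ requires. As written, your ``if'' direction therefore does not establish {\bf L}$_{p,o}$.

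Once the roles of $x$ and $x^*$ are swapped as above, the argument is precisely that of Proposition~\ref{if} (and its reflexive converse, Proposition~\ref{equivalence1}), and your deductions of (i) and (ii) from Proposition~\ref{17} then go through exactly as you describe.
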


Notice that (ii) is just a consequence of Proposition \ref{17}.(ii). To see (i), we prove dual relations between the properties {\bf L}$_{p, o}$ and {\bf L}$_{o, p}$ for functionals. Corollary~\ref{18}.(i) will then follow as a combination of Propositions \ref{17}.(i) and \ref{if}.

\begin{prop}\label{if} Let $X$ be a Banach space. If $(X^*, \K)$ has the {\bf L}$_{o, p}$, then $(X, \K)$ has the {\bf L}$_{p, o}$.
\end{prop}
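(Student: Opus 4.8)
The plan is to exploit the canonical isometric embedding $J_X \colon X \to X^{**}$ and read the {\bf L}$_{o, p}$ property of $(X^*,\K)$ ``backwards''. The crucial observation is that a fixed norm-one functional $T\in X^*=\mathcal{L}(X,\K)$ plays simultaneously two roles: it is the fixed \emph{operator} appearing in the definition of {\bf L}$_{p, o}$ for $(X,\K)$, and it can be regarded as a fixed \emph{point} of $S_{X^*}$ in the definition of {\bf L}$_{o, p}$ for $(X^*,\K)$. Everything then reduces to trading the evaluation of $T$ at $x$ for the evaluation of $J_X(x)$ at $T$.

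Concretely, given $\e>0$ and $T\in S_{X^*}$, I would let $\eta(\e,T)>0$ be precisely the number furnished by the {\bf L}$_{o, p}$ property of $(X^*,\K)$ applied to $\e$ and to the point $T\in S_{X^*}$; note this depends only on $\e$ and $T$, as required. Now suppose $x\in S_X$ satisfies $|T(x)|>1-\eta(\e,T)$. Consider the evaluation functional $J_X(x)\in X^{**}=\mathcal{L}(X^*,\K)$, which has $\|J_X(x)\|=1$ and satisfies $|J_X(x)(T)|=|T(x)|>1-\eta(\e,T)$. Applying {\bf L}$_{o, p}$ of $(X^*,\K)$ to the operator $J_X(x)$ and the point $T$, we obtain $\psi\in S_{X^*}$ with $|J_X(x)(\psi)|=1$ and $\|\psi-T\|<\e$. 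Setting $S:=\psi$, regarded as an element of $S_{X^*}=S_{\mathcal{L}(X,\K)}$, we get $\|S\|=1$, $|S(x)|=|\psi(x)|=|J_X(x)(\psi)|=1$, and $\|S-T\|<\e$, which is exactly what {\bf L}$_{p, o}$ for $(X,\K)$ demands.

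There is essentially no serious obstacle: the argument is purely the reinterpretation of $T(x)$ as $J_X(x)(T)$, together with the fact that $J_X$ is an isometry so norms are preserved on both sides. The one point worth emphasizing is that we never need $J_X$ to be surjective — only the single element $J_X(x)\in X^{**}$ enters the argument for each given $x$ — so no reflexivity hypothesis on $X$ is needed here, in contrast with the dual relations for {\bf L}$_{p, p}$ and {\bf L}$_{o, o}$ recalled in the Introduction. This asymmetry also explains why only one implication is asserted: a general operator in $\mathcal{L}(X^*,\K)=X^{**}$ need not come from a point of $X$, so the converse would require $X$ to be reflexive.
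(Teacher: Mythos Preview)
Your proof is correct and follows essentially the same route as the paper's: use the canonical embedding $J_X\colon X\to X^{**}$ to view the given $x\in S_X$ as a norm-one functional on $X^*$, apply the {\bf L}$_{o,p}$ of $(X^*,\K)$ at the point $T\in S_{X^*}$, and read off the new functional $S\in S_{X^*}$ attaining its norm at $x$ and close to $T$. Your remark that surjectivity of $J_X$ is never used (so no reflexivity is required for this implication) is exactly the point, and the paper makes the converse under reflexivity explicit in the proposition that follows.
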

\begin{proof} Assume $\e > 0$ and $x^* \in X^*$ with $\|x^*\| = 1$ are given. By hypothesis, we can take the constant $\eta(\e, x^*) > 0$ for the {\bf L}$_{o, p}$ of the pair  $(X^*, \K)$. Let $x \in S_X$ be such that $|x^*(x)| > 1 - \eta(\e, x^*)$. Using the canonical inclusion $\hat{~}:X \longrightarrow X^{**}$, we have $|\hat{x}(x^*)| = |x^*(x)| > 1 - \eta(\e, x^*)$, and so there exists $x_1^* \in S_{X^*}$ such that $|\hat{x}(x_1^*)| = |x_1^*(x)| = 1$ and $\|x_1^* - x^*\| < \e$. This proves that $(X, \K)$ has the {\bf L}$_{p, o}$.
\end{proof}

\begin{prop} Let $X$ be a reflexive Banach space. The pair $(X, \K)$ has the {\bf L}$_{p, o}$ if and only if $(X^*, \K)$ has the {\bf L}$_{o, p}$.
\label{equivalence1}	
\end{prop}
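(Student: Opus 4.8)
The plan is to notice that one of the two implications is already in hand: Proposition~\ref{if} shows that if $(X^*,\K)$ has the {\bf L}$_{o,p}$, then $(X,\K)$ has the {\bf L}$_{p,o}$, and that argument uses nothing about reflexivity. So the only thing left to establish is the reverse implication \emph{under} the reflexivity hypothesis, namely: if $X$ is reflexive and $(X,\K)$ has the {\bf L}$_{p,o}$, then $(X^*,\K)$ has the {\bf L}$_{o,p}$.

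For that direction I would fix $\e>0$ and $x^*\in S_{X^*}$, regard $x^*$ as a norm-one operator in $\mathcal{L}(X,\K)$, and let $\eta(\e,x^*)>0$ be the constant provided by the {\bf L}$_{p,o}$ of $(X,\K)$ for this $\e$ and this operator. I claim the same $\eta(\e,x^*)$ witnesses the {\bf L}$_{o,p}$ of $(X^*,\K)$ at the point $x^*$. Indeed, let $\Phi\in\mathcal{L}(X^*,\K)=X^{**}$ with $\|\Phi\|=1$ satisfy $|\Phi(x^*)|>1-\eta(\e,x^*)$. Since $X$ is reflexive, the canonical inclusion $\hat{~}\colon X\to X^{**}$ is onto, so $\Phi=\hat{x}$ for some $x\in S_X$, and therefore $|x^*(x)|=|\hat{x}(x^*)|=|\Phi(x^*)|>1-\eta(\e,x^*)$.

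Now I apply the {\bf L}$_{p,o}$ of $(X,\K)$ to $x^*$ and $x$: there is $x_1^*\in S_{X^*}$ with $|x_1^*(x)|=1$ and $\|x_1^*-x^*\|<\e$. Translating back through the identification $X^{**}=X$ we get $|\Phi(x_1^*)|=|\hat{x}(x_1^*)|=|x_1^*(x)|=1$ together with $\|x_1^*-x^*\|<\e$, so $x_1^*$ is exactly the norm-one functional required by the definition of the {\bf L}$_{o,p}$ for $(X^*,\K)$. This proves the missing implication and, combined with Proposition~\ref{if}, the stated equivalence.

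I do not expect a genuine obstacle: the argument is essentially the mirror image of the one in Proposition~\ref{if}, and the one new ingredient is that reflexivity lets every norm-one element of $X^{**}$ be realized as (the image of) a norm-one element of $X$, which is precisely what makes the {\bf L}$_{p,o}$ hypothesis on $(X,\K)$ applicable to the point coming from $\Phi$. The only point to handle carefully is to check that the modulus $\eta(\e,x^*)$ chosen for the operator $x^*\in\mathcal{L}(X,\K)$ can be reused verbatim as the modulus $\eta(\e,x^*)$ for the point $x^*\in S_{X^*}$ in the {\bf L}$_{o,p}$ of $(X^*,\K)$; since both are allowed to depend only on $\e$ and on $x^*$, there is no circularity.
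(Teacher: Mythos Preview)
Your proposal is correct and follows essentially the same route as the paper: one direction is Proposition~\ref{if}, and for the other you fix $x^*\in S_{X^*}$, use reflexivity to identify an arbitrary $\Phi\in S_{X^{**}}$ with some $\hat{x}$, and then apply the {\bf L}$_{p,o}$ of $(X,\K)$ at the operator $x^*$ and the point $x$. In fact your write-up states the output of the {\bf L}$_{p,o}$ step more accurately than the paper's printed proof, which contains a slip (it produces a new point $z\in S_X$ with $|x^*(z)|=1$ and $\|z-x\|<\e$, which is not what {\bf L}$_{p,o}$ yields); your version---obtaining $x_1^*\in S_{X^*}$ with $|x_1^*(x)|=1$ and $\|x_1^*-x^*\|<\e$---is the intended argument.
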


\begin{proof} From Proposition \ref{if}, we need to prove just the `only if' part. Assume $\e > 0$ and $x^* \in S_{X^*}$ are given.  By hypothesis, there is the constant $\eta(\e, x^*) > 0$ for the {\bf L}$_{p,o}$ of the pair  $(X, \K)$. Let $x^{**} \in X^{**}$ with $\|x^{**}\| = 1$ be such that $|x^{**}(x^*)| > 1 - \eta(\e, x^*)$. Using the canonical inclusion $\hat{~}:X \longrightarrow X^{**}$ and the reflexivity of $X$, there exists $x\in X$ such that $\hat{x}=x^{**}$. Hence, we have  $|x^{**}(x^*)|=|x^*(x)| > 1 - \eta(\e, x^*)$, and so there exists $z \in S_{X}$ such that $|x^*(z)| = 1$ and $\|z - x\| < \e$. The bidual element $\hat{z}$ is the desired one for the {\bf L}$_{o, p}$ of the pair $(X^*, \K)$.
\end{proof}

At this point we would like to stress some open problems that we are not able to solve. The first one was mentioned above. The second one relies on the fact that, those spaces $X$ for which we can assure that $(X, \mathbb{K})$ has the {\bf L}$_{o, p}$ (respectively, {\bf L}$_{p, o}$), satisfy also that $(X, \mathbb{K})$ has the {\bf L}$_{o, o}$ (respectively, {\bf L}$_{p, p}$). Indeed, it was already observed (see the discussion above \cite[Theorem~2.5]{DKLM}) that if $X$ is reflexive and LUR, then $(X, \mathbb{K})$ has the {\bf L}$_{o, o}$.

\noindent \textbf{Question 1:} Does property {\bf L}$_{o, p}$ (or {\bf L}$_{p, o}$) of the pair $(X, \mathbb{K})$ imply reflexivity of $X$?
\smallskip

\noindent \textbf{Question 2:} Does property {\bf L}$_{o, p}$ (respectively, {\bf L}$_{p, o}$) imply property {\bf L}$_{o, o}$ (respectively, {\bf L}$_{p, p}$)?
\smallskip

It is known that, for finite-dimensional Banach spaces $X$ and $Y$, the pair $(X, Y)$ satisfies property {\bf L}$_{p, p}$ (\cite[Proposition 2.9]{DKLM}). Besides this, it was proved in \cite{D} that if $X$ is finite dimensional, then the pair $(X, Y)$ has the {\bf L}$_{o, o}$ for every Banach space $Y$. However, this is not the case for both properties {\bf L}$_{p, o}$ and {\bf L}$_{o, p}$ even for linear functionals defined on 2-dimensional spaces. This is an immediate consequence of Proposition~\ref{equivalence1} and item (ii) in Proposition~\ref{17}.
 In what follows, we denote by $\ell_p^n$ the $n$-dimensional space endowed with the $p$-norm and $(e_i)_{i=1}^{n}$ their canonical basis.

\begin{prop} \label{7} The pairs $(\ell_1^n, \K)$ and $(\ell_\infty^n, \K)$ fail both {\bf L}$_{p, o}$ and {\bf L}$_{o, p}$ for $n\geq 2$.
\end{prop}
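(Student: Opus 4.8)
The key tool is the equivalence in Proposition~\ref{equivalence1} together with item (ii) of Proposition~\ref{17}: a finite-dimensional space is reflexive and automatically has the Radon–Nikod\'ym property, so if $(X,\K)$ had the {\bf L}$_{o,p}$ then $X$ would be strictly convex, and if $(X,\K)$ had the {\bf L}$_{p,o}$ then by Proposition~\ref{equivalence1} the pair $(X^*,\K)$ would have the {\bf L}$_{o,p}$, whence $X^*$ would be strictly convex. So the whole proof reduces to the elementary geometric observations that $\ell_1^n$ and $\ell_\infty^n$ are \emph{not} strictly convex and that their duals are not strictly convex either. Since $(\ell_1^n)^* = \ell_\infty^n$ and $(\ell_\infty^n)^* = \ell_1^n$, it suffices to check that neither $\ell_1^n$ nor $\ell_\infty^n$ is strictly convex for $n \geq 2$.

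**Carrying it out.** First I would record the reductions explicitly: failure of strict convexity of $X$ kills {\bf L}$_{o,p}$ via Proposition~\ref{17}.(ii) (finite-dimensional $\Rightarrow$ RNP), and failure of strict convexity of $X^*$ kills {\bf L}$_{p,o}$ via the chain {\bf L}$_{p,o}$ for $(X,\K)$ $\Leftrightarrow$ {\bf L}$_{o,p}$ for $(X^*,\K)$ $\Rightarrow$ $X^*$ strictly convex. Then I would exhibit the standard non-extreme midpoints: in $\ell_\infty^n$ the points $e_1 + e_2$ and $e_1 - e_2$ lie on the sphere and their midpoint $e_1$ also has norm $1$, so $\ell_\infty^n$ is not strictly convex; in $\ell_1^n$ the points $e_1$ and $e_2$ lie on the sphere and their midpoint $\frac12(e_1+e_2)$ has norm $1$, so $\ell_1^n$ is not strictly convex. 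Applying this to $X = \ell_1^n$ (so $X^* = \ell_\infty^n$) shows $(\ell_1^n,\K)$ fails {\bf L}$_{o,p}$ (since $\ell_1^n$ is not strictly convex) and fails {\bf L}$_{p,o}$ (since $\ell_\infty^n$ is not strictly convex); symmetrically, applying it to $X = \ell_\infty^n$ shows $(\ell_\infty^n,\K)$ fails both.

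**Where the difficulty lies.** There is essentially no obstacle here: once Propositions~\ref{17} and~\ref{equivalence1} are in hand, this proposition is a two-line corollary, and the only content is the trivial verification that the $\ell_1$ and $\ell_\infty$ norms have flat faces on their unit spheres. The one point worth stating carefully is the double use of Proposition~\ref{equivalence1} and why both directions of the $\ell_1$/$\ell_\infty$ duality are needed (to handle {\bf L}$_{p,o}$ for $\ell_1^n$ one needs strict convexity of $\ell_\infty^n$, and vice versa), but even that is routine. I would therefore keep the write-up to a short paragraph, perhaps remarking that this already shows {\bf L}$_{p,o}$ and {\bf L}$_{o,p}$ behave very differently from {\bf L}$_{p,p}$ and {\bf L}$_{o,o}$, which hold in all finite dimensions.
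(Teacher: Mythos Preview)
Your proposal is correct and follows exactly the approach indicated in the paper: the sentence preceding Proposition~\ref{7} states that it ``is an immediate consequence of Proposition~\ref{equivalence1} and item (ii) in Proposition~\ref{17},'' which is precisely the reduction you carry out. The paper gives no further details, so your write-up is, if anything, more explicit than the paper itself.
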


Next result shows that all the pairs of the form $(X, X)$, for 2-dimensional Banach spaces $X$ fails the {\bf L}$_{o, p}$ for linear operators.

\begin{prop} \label{6} Let $X$ be a $2$-dimensional Banach space. Then, the pair $(X, X)$ fails the {\bf L}$_{o, p}$.	
\end{prop}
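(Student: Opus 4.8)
The plan is to build, for every prospective choice of $\eta$ in the definition of {\bf L}$_{o,p}$, a norm-one operator $T\colon X\to X$ and a point $x\in S_X$ at which $T$ almost attains its norm (with gap less than $\eta$), yet such that no norm-one operator agreeing with the norm-attainment requirement can be found near $x$; concretely, the obstruction will be that the only candidate point $x_0\in S_X$ at which $\|T\|$ is attained is forced to be far from $x$. The natural candidates are rank-one operators of the form $T=x^*\otimes u$, where $x^*\in S_{X^*}$ and $u\in S_X$, for which $\|T\|=1$, $\|T(y)\|=|x^*(y)|$, and $T$ attains its norm precisely at those $y\in S_X$ with $|x^*(y)|=1$, i.e. at the face of $B_X$ exposed by $x^*$. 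Thus the problem reduces to a two-dimensional geometric statement about $B_X$: one wants a functional $x^*$ and a point $x\in S_X$ with $|x^*(x)|$ close to $1$ but with $x$ bounded away from every point of the face $F_{x^*}=\{y\in S_X : |x^*(y)|=1\}$.

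The key point is that every $2$-dimensional Banach space $X$ contains a line segment in its unit sphere, \emph{or}, if it does not (i.e.\ $X$ is strictly convex), one instead exploits that $(X,\mathbb K)$ already fails {\bf L}$_{o,p}$ is not available in general — so the argument must handle strictly convex $X$ separately. I would split into two cases. \textbf{Case 1: $S_X$ contains a nontrivial segment.} Let $[a,b]\subseteq S_X$ be a maximal such segment, and let $x^*\in S_{X^*}$ be the functional that attains its norm on exactly this segment (it is constant equal to $1$ on $[a,b]$). Take $x\in S_X$ very close to the endpoint $a$ but lying on the ``strictly convex side'', so that $|x^*(x)|$ is as close to $1$ as we like while $x$ is a definite distance (depending only on the geometry of $B_X$, not on $\eta$) from the whole segment $[a,b]$, hence from $F_{x^*}$. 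Then $T=x^*\otimes b$ shows the failure: given $\eta>0$, choosing $x$ with $|x^*(x)|>1-\eta$ still leaves $x$ far from every norm-attaining point of $T$. \textbf{Case 2: $X$ is strictly convex.} Here every face $F_{x^*}$ is a single point, so the rank-one trick must be combined with the fact that in two dimensions strict convexity of $X$ does not force smoothness; one picks $x^*$ to be a functional at a \emph{non-smooth} point $x$ of $S_X$ — such a point exists unless $X$ is both strictly convex and smooth, but a genuinely two-dimensional space that is simultaneously strictly convex and smooth still need not be LUR-rich enough... this is exactly where the subtlety lies, so in this case I would instead invoke that a strictly convex and smooth $2$-dimensional space is automatically LUR (finite dimension!), and handle it via the functional-case obstruction transported through rank-one operators — more carefully, reduce to Proposition~\ref{7}-type reasoning by noting that if $(X,X)$ had {\bf L}$_{o,p}$ then, restricting attention to rank-one operators $x^*\otimes u$ with $u$ fixed at an extreme point, one would recover {\bf L}$_{o,p}$ for $(X,\mathbb K)$, contradicting Proposition~\ref{7} once one checks that every $2$-dimensional $X$ contains an isometric copy of a space for which $(X,\mathbb K)$ fails — which it does not in the strictly convex smooth case, forcing a direct geometric argument.

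Because the honest division above shows the strictly-convex-and-smooth situation is the delicate one, the cleanest route I would actually pursue is uniform: fix any extreme point $u\in S_X$ and any $x\in S_X$ with $x\neq\pm u$, let $x^*\in S_{X^*}$ be a supporting functional at $u$ (so $x^*(u)=1$), and consider $T=x^*\otimes u$. Then $\|T\|=1$, the set of norm-attaining points of $T$ is $F_{x^*}\ni u$, and one shows $\sup_{y\in S_X}\|T(y)\|=1$ is \emph{not} attained arbitrarily close to a well-chosen $x$: pick $x$ in the sphere with $|x^*(x)|$ maximal among points at distance $\geq \e_0$ from $F_{x^*}$ for a small fixed $\e_0$; by compactness of $S_X$ and upper semicontinuity this maximum $1-c$ satisfies $c>0$, yet is \emph{independent of $\eta$}, giving the failure once we take $\eta<c$. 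The main obstacle, and the step requiring care, is verifying that for a \emph{suitable} fixed reference point $x$ one genuinely has $|x^*(x)|$ close to $1$ while $x$ stays far from $F_{x^*}$; in the strictly convex smooth case $F_{x^*}=\{u\}$ is a point, so one must choose $x^*$ (equivalently $u$) cleverly — near a point of $S_{X^*}$ where the dual sphere is ``flat'', using that $X^*$, being $2$-dimensional, cannot be everywhere smooth unless $X$ is everywhere strictly convex, and then a direct $2$-dimensional estimate closes the gap. I expect this geometric lemma — roughly, ``in any $2$-dimensional $X$ there is a norm-one functional $x^*$ and a point $x$ with $|x^*(x)|>1-\eta$ and $\dist(x,F_{x^*})\geq\e_0$ for $\e_0$ not depending on $\eta$'' — to be the crux, and it should follow by contradiction from the compactness of $B_X$ together with the failure of {\bf L}$_{o,p}$ for $(X,\mathbb K)$ established in Proposition~\ref{7} for the reference spaces and from strict-convexity-breaking segments otherwise.
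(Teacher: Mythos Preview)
Your approach has a genuine gap in the strictly convex case, and it cannot be repaired along the lines you suggest. Rank-one operators $T=x^*\otimes u$ satisfy $\|T(y)\|=|x^*(y)|$, so using them to witness failure of {\bf L}$_{o,p}$ for $(X,X)$ is \emph{exactly} the same as witnessing failure of {\bf L}$_{o,p}$ for $(X,\mathbb K)$. But by Proposition~\ref{17}(i), every strictly convex finite-dimensional space (being reflexive and LUR) satisfies {\bf L}$_{o,p}$ for the pair $(X,\mathbb K)$. Concretely, for $X=\ell_2^2$ the pair $(\ell_2^2,\mathbb K)$ even has the {\bf BPBop}, so no choice of $x^*$ and fixed $x$ can give $|x^*(x)|>1-\eta$ with $\dist(x,F_{x^*})\geq\e_0$ for arbitrarily small $\eta$. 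In particular, the ``geometric lemma'' you state at the end --- that in any $2$-dimensional $X$ there are $x^*$ and a fixed $x$ with $|x^*(x)|>1-\eta$ and $\dist(x,F_{x^*})\geq\e_0$ with $\e_0$ independent of $\eta$ --- is simply false when $X$ is strictly convex. Your ``cleanest route'' paragraph also has the quantifiers inverted: showing that points at distance $\geq\e_0$ from $F_{x^*}$ satisfy $|x^*(\cdot)|\leq 1-c$ for some $c>0$ is a \emph{positive} statement (a consequence of compactness plus strict convexity), not a witness to failure.

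The paper avoids this obstruction by using genuinely rank-two operators. With an Auerbach basis $\{(v_1,v_1^*),(v_2,v_2^*)\}$ one sets
\[
T_n(x)=\Bigl(1-\tfrac1n\Bigr)v_1^*(x)\,v_1+v_2^*(x)\,v_2,
\]
so that $\|T_n\|=1=\|T_n(v_2)\|$ and $\|T_n(v_1)\|=1-\tfrac1n$. A short computation shows that $\|T_n(x_0)\|=1$ forces $|v_2^*(x_0)|=1$, which is incompatible with $\|x_0-v_1\|<1$ since $|v_2^*(x_0)|=|v_2^*(x_0-v_1)|\leq\|x_0-v_1\|$. The two-dimensional interaction between the $v_1$- and $v_2$-coordinates is essential here; collapsing to rank one loses exactly the mechanism that produces the contradiction.
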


\begin{proof} Consider $\{(v_1,v^*_1), (v_2,v^*_2)\}$ the Auerbach basis of the space $X$. Then, for every $x \in X$, we have that $x = v_1^*(x)v_1 + v_2^*(x)v_2$. Let us suppose by contradiction that the pair $(X, X)$ satisfies the {\bf L}$_{o, p}$ with some function $\eta(\cdot,\cdot)$ and let $n \in \N$ be such that $\frac{1}{n} < \eta(\e_0, v_1)$ for a fixed positive number $\e_0 \in (0, 1)$. Define $T_n: X \longrightarrow X$ by
	\begin{equation*}
		T_n(x) := \left(1 - \frac{1}{n} \right) v_1^*(x) v_1 + v_2^*(x)v_2 \ \ \ \left( x \in X \right).
	\end{equation*}
We see that
	\begin{eqnarray*}
		\|T_n(x)\| = \left\| \left(1 - \frac{1}{n} \right) v_1^*(x) v_1 + v_2^*(x)v_2 \right\| 
		&\leq& \left(1 - \frac{1}{n} \right) \left\| (v_1^*(x)v_1 + v_2^*(x)v_2) \right\| +  \frac{1}{n}\|v_2^*(x)v_2\| \\
		&\leq& \left(1 - \frac{1}{n} \right)\|x\| + \frac{1}{n} \leq 1
	\end{eqnarray*}
for arbitrary $x\in B_X$. This implies that $\|T_n\| = 1 = \|T_n(v_2)\|$. Now, since
\begin{equation*} 
	\|T_n(v_1)\| = 1 - \frac{1}{n} > 1 - \eta(\e_0, v_1), 
\end{equation*}	
there is $x_0 \in S_X$ such that $\|T_n(x_0)\| = 1$ and $\|x_0 - v_1\| < \e_0$. On the other hand, we have that
	\begin{equation*}
		1 = \|T_n(x_0)\| \leq \left(1 - \frac{1}{n} \right) \|x_0\| + \frac{1}{n}  |v_2^*(x_0)| \leq 1
	\end{equation*}
	which implies $|v_2^*(x_0)| = 1$. This gives us a contradiction since $1 > \e_0 > \|x_0 - v_1\| > |v_2^*(x_0)|$.
\end{proof}

We get another negative result for the property {\bf L}$_{o, p}$ when the range space is $\ell_{\infty}^2$. 

\begin{prop} \label{12} Let $X$ be a Banach space with $\dim(X) \geq 2$. Then, $(X, \ell_{\infty}^2)$ fails the {\bf L}$_{o, p}$.
\end{prop}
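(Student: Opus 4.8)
The strategy is to reduce the operator problem to a one–dimensional phenomenon by composing with a rank-one building block, exactly in the spirit of Proposition~\ref{6}. Write $\ell_\infty^2$ with its two coordinate functionals $\pi_1,\pi_2$, so that $\|(a,b)\|_\infty=\max\{|a|,|b|\}$. Fix any norm-one functional $x^*\in S_{X^*}$ and a norm-one vector $e\in S_X$ with $x^*(e)=1$ (which exists when $\dim X\ge 2$, e.g.\ by Hahn–Banach; even just $|x^*(e)|$ close to $1$ would do after a scalar rotation). Also fix some $u\in B_X$ with $x^*(u)=0$ and $u\neq 0$. The idea is then to build a family of operators $T_n\colon X\to\ell_\infty^2$ whose norm is attained only in the second coordinate, while $T_n$ almost attains its norm at $e$ through the first coordinate.

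\medskip

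\textbf{Construction.} Suppose for contradiction that $(X,\ell_\infty^2)$ has the {\bf L}$_{o,p}$, with modulus $\eta(\cdot,\cdot)$. Apply it at the point $e$ and some fixed $\e_0\in(0,1)$ (to be chosen small, depending on $\|u\|$), obtaining $\eta_0:=\eta(\e_0,e)>0$; pick $n$ with $1/n<\eta_0$. Define
\begin{equation*}
T_n(x):=\Bigl(\bigl(1-\tfrac1n\bigr)x^*(x),\ g(x)\Bigr)\qquad(x\in X),
\end{equation*}
where $g\in S_{X^*}$ is a functional chosen so that $T_n$ has norm exactly $1$ and attains it only ``via the second coordinate''. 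A convenient choice is $g$ attaining its norm at a vector $w\in S_X$ with $x^*(w)=0$ (so that $T_n$ sends $w$ to $(0,1)$, forcing $\|T_n\|=1=\|T_n(w)\|$), and with the property that $x^*$ and $g$ are ``sufficiently independent'' near $e$ — concretely, that $|g(x)|$ stays bounded away from $1$ for all $x\in S_X$ close to $e$. Since $x^*(e)=1$ and we may pick $g$ with $|g(e)|<1$, continuity gives an $\e_0>0$ and a $\delta>0$ with $|g(x)|\le 1-\delta$ whenever $x\in B_X$ and $\|x-e\|<\e_0$. Then $\|T_n(e)\|_\infty=\max\{1-\tfrac1n,\,|g(e)|\}=1-\tfrac1n>1-\eta_0$, so the {\bf L}$_{o,p}$ produces $x_0\in S_X$ with $\|T_n(x_0)\|_\infty=1$ and $\|x_0-e\|<\e_0$. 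But the first coordinate satisfies $|(1-\tfrac1n)x^*(x_0)|\le 1-\tfrac1n<1$, and by our choice of $\e_0$ the second coordinate satisfies $|g(x_0)|\le 1-\delta<1$; hence $\|T_n(x_0)\|_\infty<1$, a contradiction.

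\medskip

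\textbf{Main obstacle.} The delicate point is guaranteeing the existence of a functional $g\in S_{X^*}$ that simultaneously (a) attains its norm somewhere in $\ker x^*$ — so that $\|T_n\|=1$ is realized and the operator is genuinely of norm one — and (b) satisfies $|g(x)|\le 1-\delta$ on a whole neighborhood of $e$ in $B_X$. Requirement (b) really only needs $|g(e)|<1$ together with a compactness/continuity argument; but (a) needs some care when $\ker x^*$ is not nicely positioned, and in the non-reflexive case norm attainment of $g$ on $\ker x^*$ may fail. One fix is to drop the exact equality $\|T_n(w)\|=1$ and instead only require $\|T_n\|=1$ (which follows as long as $\sup_{S_X}|g|=1$, i.e.\ $\|g\|=1$, since the second coordinate of $T_n$ is just $g$), noting that the contradiction above used only $\|T_n(x_0)\|_\infty=1$ and never that the norm of $T_n$ is attained. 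Thus it suffices to take \emph{any} $g\in S_{X^*}$ with $|g(e)|<1$; such a $g$ exists precisely because $\dim X\ge 2$ (a space of dimension $\ge 2$ cannot have all norm-one functionals attain value $1$ at the single point $e$). With this simplification the argument is clean, and the role of the hypothesis $\dim X\ge 2$ is exactly to furnish the functional $g$ separating directions from $e$.
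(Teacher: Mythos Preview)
Your argument is correct and follows essentially the same route as the paper's: both build $T_n(x)=\bigl((1-\tfrac1n)x^*(x),\,g(x)\bigr)$ for suitable $x^*,g\in S_{X^*}$, observe $\|T_n\|=1$ and $\|T_n(e)\|_\infty>1-\eta(\e_0,e)$, and then show that no $x_0$ near $e$ can make the $\ell_\infty^2$-norm equal to $1$. The paper's version is a bit cleaner because it picks $x^*,g$ from an Auerbach basis of a $2$-dimensional subspace (so $g(e)=0$), which gives the contradiction $\|x_0-e\|\ge|g(x_0)|=1>\e_0$ in one line and dispenses with your continuity step; you can also drop the unused vector $u$ and the detour about whether $g$ attains its norm, since (as you yourself note) only $\|g\|=1$ is needed.
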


\begin{proof} Let $x_1, x_2 \in S_X$ and $x_1^*, x_2^* \in S_{X^*}$ be such that $x_i^*(x_j) = \delta_{ij}$ for $i, j = 1, 2$ (we may choose such elements by taking the Hahn-Banach extension of functionals of the Auerbach basis on a 2-dimensional subspace of $X$). We assume that the pair $(X, \ell_{\infty}^2)$  has the {\bf L}$_{o, p}$ with some function $\eta(\cdot,\cdot)$ and consider $n \in \N$ such that $\frac{1}{n} < \eta(\e_0, x_1)$ for a fixed positive number $\e_0 \in (0, 1)$. Define $T_n: X \longrightarrow \ell_{\infty}^2$ by
	\begin{equation*}
		T_n (x) := \left( \left(1 - \frac{1}{n} \right) x_1^*(x), x_2^*(x) \right) \ \ \ (x \in X).
	\end{equation*}
	Then $\|T_n\| \leq 1$ and $\|T_n(x_2)\|_{\infty} = 1$, which implies $\|T_n\| = 1$. Since
	\begin{equation*}
		\|T_n(x_1)\|_{\infty} = 1 - \frac{1}{n} > 1 - \eta(\e_0, x_1),	
	\end{equation*}
	there is $z \in S_X$ such that $\|T_n(z)\|_{\infty} = 1$ and $\|z - x_1\| < \e_0$. So, since
	\begin{equation*}
		1 = \|T_n(z)\|_{\infty} = \max \left\{ \left(1 - \frac{1}{n} \right) |x_1^*(z)|, |x_2^*(z)| \right\},
	\end{equation*}
	we have that $|x_2^*(z)| = 1$. Nevertheless, we have that $1 > \e_0 > \|z - x_1\| \geq |x_2^*(z) - x_2^*(x_1)| = |x_2^*(z)|$, which gives a contradiction.
\end{proof}
Taking into account Propositions~\ref{6} and \ref{12} and Corollary~\ref{l1c0} below, we leave the following open question.

\noindent \textbf{Question 3:} Are there spaces $X, Y$ with dim$(X)$, dim$(Y)$ $\geq 2$ such that $(X, Y)$ satisfies property {\bf L}$_{o, p}$?
\smallskip

Although we have a negative result in Proposition \ref{12} for the {\bf L}$_{o, p}$, the situation with property {\bf L}$_{p,o}$ is quite different. Indeed, we will prove that when we assume that the pair $(X, \K)$ has the {\bf L}$_{p, o}$, so does the pair $(X, \ell_{\infty}^2 )$. In fact, we get a more general result for Banach spaces satisfying property $\beta$ of Lindenstrauss (see \cite{Lind}). We say that a Banach space $Y$ has property $\beta$ with a index set $I$ and a constant $0 \leq \rho < 1$ if there is a set $\{(y_i,y_i^*): i \in I\} \subset S_{Y}\times S_{Y^*}$ such that

\begin{itemize}
\item $y_i^*(y_i) = 1$ for all $i \in I$,
\item $|y_i^*(y_j)| \leq \rho < 1$ for all $i, j \in I$ with $i \not= j$, and
\item $\|y\| = \sup_{i \in I} |y_i^*(y)|$ for all $y \in Y$.
\end{itemize}
Examples of Banach spaces satisfying such a property are $c_0(I)$ and $\ell_{\infty}(I)$ by taking $\{(e_i, e_i^*): i \in I \}$, the canonical biorthogonal system of these spaces. For the next result, we notice that in Definition \ref{def}.(a) one can use $T \in B_{\mathcal{L}(X, Y)}$ instead of $T \in S_{\mathcal{L}(X, Y)}$ by a simple change of parameters.

\begin{theorem} \label{betar}Let $X, Y$ be Banach spaces. Suppose that $(X,\mathbb{K})$ satisfies the {\bf L}$_{p, o}$ and assume that $Y$ has property $\beta$ with a finite index set $I$ and constant $\rho$. Then, the pair $(X, Y)$ has the {\bf L}$_{p, o}$.
\end{theorem}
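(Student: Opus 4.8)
The plan is to take a norm-one operator $T \in \mathcal{L}(X, Y)$ and a point $x \in S_X$ at which $\|T(x)\|$ is close to $1$, and to ``move'' the problem back to the scalar-valued {\bf L}$_{p, o}$ of $(X, \K)$ applied to a suitable functional, using the structure provided by property $\beta$. Since $Y$ has property $\beta$ with a \emph{finite} index set $I$, for each $i \in I$ the composition $y_i^* \circ T$ is a functional on $X$, and $\|T(x)\| = \max_{i \in I} |y_i^*(T(x))|$; because $I$ is finite this maximum is attained, say at some index $i_0$, and then $\|y_{i_0}^* \circ T\| \geq |y_{i_0}^*(T(x))| = \|T(x)\|$ is close to $1$. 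First I would normalize: writing $x^* := (y_{i_0}^* \circ T)/\|y_{i_0}^* \circ T\| \in S_{X^*}$, the point $x$ almost attains the norm of $x^*$. Then I apply the {\bf L}$_{p, o}$ of $(X, \K)$ to $x^*$ to produce $z^* \in S_{X^*}$ with $|z^*(x)| = 1$ and $\|z^* - x^*\|$ small; the constant $\eta$ obtained this way depends only on $\varepsilon$ and on $x^*$, hence ultimately only on $\varepsilon$ and on $T$ (here the finiteness of $I$ is what lets us control things uniformly, since there are only finitely many candidate functionals $y_i^* \circ T$ and finitely many directions to worry about).

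Next I would reconstruct the operator $S \in \mathcal{L}(X, Y)$. The natural candidate is to replace the ``$i_0$-th coordinate'' of $T$ by the new functional $z^*$: define
\begin{equation*}
S(u) := T(u) - \big(y_{i_0}^*(T(u))\big) y_{i_0} + z^*(u)\, y_{i_0}, \qquad u \in X,
\end{equation*}
or a suitably scaled version thereof. One checks that $S(x)$ has $i_0$-th coordinate of modulus $1$, so $\|S(x)\| = 1$ provided $\|S\| = 1$; and $\|S - T\|$ is controlled by $\|z^* - x^*\|$ together with the gap between $\|y_{i_0}^* \circ T\|$ and $1$, both of which are small by the choice of $\eta$. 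The point of property $\beta$ — in particular the separation $|y_i^*(y_j)| \leq \rho < 1$ for $i \neq j$ — is to guarantee that modifying a single coordinate does not accidentally increase the norm on the other coordinates beyond $1$; a small rescaling argument (using $\rho < 1$ and the smallness of the perturbation) fixes $\|S\|$ to be exactly $1$ while keeping $\|S(x)\| = 1$ and $\|S - T\| < \varepsilon$.

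The main obstacle I anticipate is the bookkeeping of the norm-control: after the one-coordinate surgery, the new operator $S$ need not have norm exactly $1$, and naively renormalizing could destroy $\|S(x)\| = 1$. The way around this is to argue that $\|S\|$ is very close to $1$ — on the $i_0$-coordinate it is $1$ by construction, and on the other coordinates property $\beta$ plus the smallness of $\|z^* - x^*\|$ keeps everything at most $1 + \delta$ for a controllable $\delta$ — and then to absorb the discrepancy by a careful choice of scaling of $z^*$ (or by first shrinking $T$ slightly, using the remark preceding the theorem that allows $T \in B_{\mathcal{L}(X,Y)}$ rather than $S_{\mathcal{L}(X,Y)}$). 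A secondary subtlety is that the index $i_0$ at which the maximum is attained depends on $x$, but since $I$ is finite one simply takes $\eta(\varepsilon, T)$ to be the minimum over $i \in I$ of the constants $\eta_{(X,\K)}(\varepsilon', (y_i^* \circ T)/\|y_i^* \circ T\|)$ (over those $i$ with $y_i^* \circ T \neq 0$), for an appropriately adjusted $\varepsilon'$ depending on $\varepsilon$ and $\rho$; finiteness of $I$ is exactly what makes this minimum positive, and this is precisely why the result fails for infinite $I$.
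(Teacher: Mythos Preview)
Your proposal is correct and follows essentially the same route as the paper's proof: apply the scalar {\bf L}$_{p,o}$ to the coordinate functionals $y_i^*\circ T$, take the minimum over the finite index set $I$ to define $\eta(\e,T)$, perform the one-coordinate surgery, and use the gap $\rho<1$ to control the other coordinates. The paper makes the ``careful scaling of $z^*$'' you anticipate explicit by replacing the $i_0$-th coordinate with $(1+\tfrac{\e}{4})\,z^*$ rather than $z^*$, which forces that coordinate to strictly dominate all others and renders the final renormalization $V:=U/\|U\|$ harmless.
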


\begin{proof} The proof is similar to \cite[Proposition 2.4]{DKL}, but we give the details for sake of completeness. Let $I$ be a finite set and $\{(y_i,y_i^*): i \in I\} \subset S_{Y}\times S_{Y^*}$ be the set of property $\beta$. Consider $\eta (\cdot,\cdot)$, the function for the pair $(X, \K)$, which satisfies the {\bf L}$_{p, o}$. For each $\e > 0$ and $T\in S_{\mathcal{L}(X,Y)}$, we define 
\begin{equation*}	
	\psi(\e,T)=\min_{i\in I}\{\eta (\e,y_i^*\circ T )\} > 0.
\end{equation*}
Fixed $\e_0>0$ and $T_0\in S_{\mathcal{L}(X,Y)}$, we choose $0<\xi < \frac{\e_0}{4}$ such that
\begin{equation}
1 + \rho\left(\frac{\e_0}{4} + \xi \right) < \left(1 + \frac{\e_0}{4} \right)(1 - \xi).
\label{propbeta}
\end{equation}
Now, let $x_0 \in S_X$ be such that $\|T_0(x_0)\| > 1 - \psi (\xi,T_0)$. By the definition of property $\beta$ and the construction of $\psi$, there exists $k \in I$ such that 
$$y_{k}^*(T_0(x_0)) > 1 - \psi (\xi,T_0)\geq  1 - \eta (\xi,y_k\circ T_0) .$$ 
Hence, there exists a functional $x_1^* \in S_{X^*}$ such that $|x_1^*(x_0)| = 1$ and $\|x_1^* - y_{k}^*\circ T_0^*\| < \xi$. Now, we define $U: X \longrightarrow Y$ by
\begin{equation*}
U(x) := T_0(x) + \left[ \left( 1 + \frac{\e_0}{4} \right) x_1^*(x) - y_{k}^*\circ T_0^*(x) \right] y_{k} \ \ \ (x \in X).
\end{equation*}
We have that $\|U - T_0\| < \frac{\e_0}{4} + \xi < \frac{\e_0}{2}$. Moreover, for arbitrary $j\neq k$, we have that
\begin{equation*}
\|y_{j}^*\circ U\| \leq 1 + \rho \left( \frac{\e_0}{4} + \xi \right) < \left(1 + \frac{\e_0}{4} \right)(1 - \xi) < 1 + \frac{\e_0}{4} \ \ \ \mbox{and} \ \ \  \|y_{k}^*\circ U\|=1 + \frac{\e_0}{4}.
\end{equation*}
Then, $U$ attains its norm at $x_0$ and so the operator $V := U/\|U\|$ is the one we were looking for.
\end{proof}

The main difference between \cite[Proposition 2.4]{DKL} and Theorem \ref{betar} is the cardinality of the index set $I$. Indeed, in \cite[Proposition 2.4]{DKL}, we see that the set $I$ does not need to be finite, since if $X$ is uniformly smooth, then the pair $(X, \K)$ satisfies the {\bf BPBpp} and so does the {\bf L}$_{p, o}$, which is, in this case, uniform, in the sense that $\eta$ depends only on a given $\e > 0$. This gives that $\psi(\e,T)=\inf_{i\in I}\{\eta (\e,y_i^*\circ T )\}$, in the proof of Theorem \ref{betar}, is strictly bigger than $0$. Naturally, one may ask whether the same result holds for infinite index sets. It turns out that this is not the case. To see why this happens, we consider the Banach space $X= \left[\oplus_{i=2}^\infty \ell_i^2\right]_{\ell_2}$, the $\ell_2$ direct sum of 2-dimensional $\ell_i$-spaces. We have that $X^*$ is a reflexive LUR Banach space (see, for example, \cite[Theorem 1.1]{L}). Hence, the pair $\left(\left[\oplus_{i=2}^\infty \ell_i^2\right]_{\ell_2}, \mathbb{K}\right)$ satisfies property {\bf L}$_{p, o}$ by Corollary \ref{18}. Recall that $\ell_{\infty}$ satisfies property $\beta$ with $I = \N$ and $\rho = 0$. Our counterexample is described in the next proposition.

\begin{prop} The pair $\left( \left[\oplus_{i=2}^\infty \ell_i^2\right]_{\ell_2},\ell_\infty\right)$ does not satisfy the {\bf L}$_{p, o}$.
\end{prop}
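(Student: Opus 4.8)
The goal is to show that $\left(\left[\oplus_{i=2}^\infty \ell_i^2\right]_{\ell_2},\ell_\infty\right)$ fails {\bf L}$_{p,o}$, so I need to produce one bad operator $T_0$ (of norm one) and one bad point $x_0\in S_X$: a point at which $T_0$ \emph{almost} attains its norm, but such that no norm-one operator attaining its norm at $x_0$ is close to $T_0$. The natural choice is to exploit the fact that in $\ell_\infty = \ell_\infty(\mathbb{N})$ the index set is infinite, so the constant $\psi(\e,T)=\inf_{i\in\mathbb{N}}\eta(\e,e_i^*\circ T)$ from the proof of Theorem~\ref{betar} can degenerate to $0$ — and indeed it must, because the spaces $\ell_i^2$ become "flatter" as $i\to\infty$, so the local modulus $\eta(\e,\cdot)$ for functionals on the $i$-th slice deteriorates with $i$.

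**Construction.** Write $X=\left[\oplus_{i=2}^\infty \ell_i^2\right]_{\ell_2}$, let $P_i\colon X\to\ell_i^2$ be the canonical projection, and let $(e_1^{(i)},e_2^{(i)})$ be the canonical basis of the $i$-th block. Define $T_0\colon X\to\ell_\infty$ by letting the $i$-th coordinate of $T_0(x)$ be a norm-one functional $f_i$ on $\ell_i^2$ applied to $P_i(x)$, rescaled by a factor $c_i\uparrow 1$ chosen so that $\|T_0\|=1$ but $T_0$ does \emph{not} attain its norm along any single block; concretely one takes $f_i$ to be a functional whose norm-one "exposing set" on $B_{\ell_i^2}$ is a point $u_i$ for which the $\ell_i^2$-unit sphere is very flat near $u_i$ (the flatness quantified by the modulus of convexity of $\ell_i^2$, which tends to $0$ uniformly as $i\to\infty$). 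Then I pick a distinguished point $x_0\in S_X$ supported (say) in finitely many blocks, where $\|T_0(x_0)\|$ is close to $1$, and I argue: if some $V\in S_{\mathcal{L}(X,\ell_\infty)}$ attains its norm at $x_0$ and $\|V-T_0\|<\e$, then for $\e$ small the $k$-th coordinate functional $e_k^*\circ V$ must, for the relevant block index $k=k(x_0)$, nearly coincide with $f_k$ on that block and attain its norm at $P_k(x_0)$ — forcing $P_k(x_0)$ to be (nearly) the exposing point $u_k$, which it is not, contradiction. The quantitative version of this last step is where the blocks' increasing flatness is used: for the coordinates $k$ with large index one cannot separate "attains" from "almost attains" within any fixed $\eta>0$, so no valid $\eta(\e,x_0)$ exists.

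**Key steps, in order.** (1) Fix the biorthogonal data and state the modulus-of-convexity fact for $\ell_i^2$: $\delta_{\ell_i^2}(t)\to 0$ as $i\to\infty$ for each fixed $t\in(0,2]$ (equivalently, the $\ell_i^2$-balls flatten out). (2) Define $T_0$ slicewise with rescaling constants $c_i\to 1$ and verify $\|T_0\|=1$; observe $T_0$ fails to attain its norm, and more importantly that the "local attainment modulus" for the $i$-th coordinate deteriorates with $i$. (3) Fix $\e_0\in(0,1)$ and $x_0\in S_X$ (lying in a single block, or a controlled finite combination) with $\|T_0(x_0)\|$ prescribed close to $1$; this is the candidate point. (4) Given any hypothetical $\eta>0$, choose the block index $n$ large enough that $c_n>1-\eta$ and the flatness in block $n$ is worse than $\e_0$, and transplant $x_0$ into block $n$ — i.e. replace $x_0$ by the corresponding unit vector $x_n$ supported in block $n$ at the point where $f_n$ is nearly maximised. (5) Show $\|T_0(x_n)\|>1-\eta$ but every norm-one $V$ attaining its norm at $x_n$ with $\|V-T_0\|<\e_0$ forces $P_n(x_n)$ to be within $\e_0$ of the $\ell_n^2$-exposing point $u_n$, which fails by the choice in step (4); conclude {\bf L}$_{p,o}$ fails.

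**Main obstacle.** The delicate point is step (5): quantifying how a small perturbation $V=T_0+R$ with $\|R\|<\e_0$ affects the coordinate functional $e_n^*\circ V$ on the block $\ell_n^2$ and how "attaining the norm of $V$ at $x_n$" constrains that coordinate, while simultaneously controlling the interference from the other coordinates $j\ne n$ (whose norms $\|e_j^*\circ V\|$ must stay below $\|V\|$). This is exactly the place where, in the finite-$I$ proof of Theorem~\ref{betar}, one used inequality~\eqref{propbeta} to push the off-diagonal coordinates strictly below the diagonal one; with $I$ infinite there is no uniform room, and the argument must instead run block-by-block and extract the contradiction from the fact that $\inf_i \eta(\e,e_i^*\circ T_0)=0$. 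Getting the estimates to close — in particular choosing $c_i$ and the $\ell_i^2$-geometry together so that $T_0$ has norm exactly one yet the local modulus collapses — is the technical heart of the proof.
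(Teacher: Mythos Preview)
Your overall strategy --- fix one operator $T_0$ and exhibit, for every candidate $\eta>0$, a unit vector $x_n$ supported in the $n$-th block with $\|T_0(x_n)\|>1-\eta$ at which no nearby norm-one operator can attain --- is correct and is exactly the route the paper takes. Two points, however, deserve comment.

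First, the rescaling constants $c_i<1$ and the insistence that $T_0$ not attain its norm are unnecessary. Property \textbf{L}$_{p,o}$ asks whether $T_0$ can be approximated by operators attaining at a \emph{prescribed} point; whether $T_0$ itself attains elsewhere is irrelevant. The paper simply lets the $i$-th coordinate of $T$ be $\tilde E_i f_i^*$ with $f_i^*=(1,0)\in S_{(\ell_i^2)^*}$, so $\|T\|=1$ exactly and all estimates stay clean.

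Second --- and this is a genuine gap --- your step~(5) is set up in the wrong direction. You want to deduce: if $e_k^*\circ V$ is within $\e_0$ of $f_k$ and attains at $P_k(x_n)$, then $P_k(x_n)$ is within $\e_0$ of the exposing point $u_k$. This asks for a Lipschitz bound on the map (functional)$\mapsto$(attainment point) from $S_{(\ell_n^2)^*}=S_{\ell_{n'}^2}$ to $S_{\ell_n^2}$ near $(1,0)$, with $n'=n/(n-1)\to 1$. But that map is violently \emph{expanding} there: a perturbation of order $\epsilon$ in the functional moves the attainment point by order $\epsilon^{1/(n-1)}\to 1$. So for large $n$ there are functionals $\e_0$-close to $f_n$ whose attainment points lie far from $u_n$, and your contradiction does not close. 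The reverse implication is equally unhelpful, since the duality map $S_{\ell_n^2}\to S_{\ell_{n'}^2}$ is violently contracting near $(1,0)$.

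The paper bypasses this by choosing the diagonal points $z_i=(2^{-1/i},2^{-1/i})\in S_{\ell_i^2}$. Each $E_iz_i$ is a smooth point of $X$, so any $S\in S_{\mathcal L(X,\ell_\infty)}$ attaining its norm there must have some coordinate $P_{j_0}S$ equal to $c\,\tilde E_iz_i^*$ for a modulus-one scalar $c$, where $z_i^*=(2^{-(1-1/i)},2^{-(1-1/i)})$ is computed explicitly. Inspecting second coordinates gives $\|P_jT-c\,\tilde E_iz_i^*\|\ge 2^{-(1-1/i)}\to\tfrac12$ for \emph{every} $j$ and every $c$; combined with $\|T(E_iz_i)\|=2^{-1/i}\to 1$, this finishes the proof. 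The argument is thus a short explicit calculation at a well-chosen (symmetric) point, not a modulus-of-convexity estimate near $(1,0)$ --- and, as explained above, the latter cannot be made to work.
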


\begin{proof} We denote by $E_i$ and $\tilde{E}_i$ the natural embeddings from $\ell_i^2$ to $X$ and $(\ell_i^2)^*$ to $X^*$. Also we denote by $P_i$ the natural projections from $\ell_\infty$ to the $i$th coordinate. For $f_i^* = (1,0) \in S_{(\ell_i^2)^*}$, we define $T\in S_{\mathcal{L}(X,\ell_\infty)}$ by $T(\cdot)=(\tilde{E}_if_i^* (\cdot))_i$. Note that for each $z_i=\left(\frac{1}{2^{1/i}},\frac{1}{2^{1/i}}\right)\in S_{\ell_i^2}$, the element $z_i^*=\left(\frac{1}{2^{1-\frac{1}{i}}},\frac{1}{2^{1-\frac{1}{i}}}\right)$ is the unique norm-one functional so that $z_i^*(z_i)=1$. This shows that $\tilde{E}_iz_i^*$ is the unique element in $S_{X^*}$ so that $\tilde{E}_iz_i^*(E_iz_i)=1$, and then if an operator $S\in S_{\mathcal{L}(X,\ell_\infty)}$ attains its norm at $E_iz_i$, then there exists $j_0 \in \mathbb{N}$ and a modulus 1 scalar $c$ so that $P_{j_0} S=c\tilde{E}_iz_i^*$. From the construction, we see that 
\begin{equation*}	
\|T(E_iz_i)\| \longrightarrow 1 \ \ \ \mbox{and} \ \ \  \|P_jT-c\tilde{E}_iz_i^*\|>\frac{1}{2^{1-\frac{1}{i}}}
\end{equation*}
 for any modulus 1 scalar $c$ and $j\in \mathbb{N}$. 
	This proves that $\left( \left[\oplus_{i=2}^\infty \ell_i^2\right]_{\ell_2}, \ell_\infty\right)$ cannot satisfy the {\bf L}$_{p, o}$
\end{proof}

Next we give some results on stability concerning properties {\bf L}$_{p, o}$ and {\bf L}$_{o, p}$. 
Recall that a subspace $Z$ of a Banach space $X$ is one-complemented if $Z$ is the range of a norm-one projection on $X$.

\begin{theorem}\label{comple}
Let $X, Y$ be Banach spaces, and let $Z$ be a one-complemented subspace $Z$ of $X$. 
\begin{itemize}
\item[(i)] If the pair $(X,Y)$ has the {\bf L}$_{p, o}$, then so does $(Z,Y)$.
\item[(ii)] If the pair $(X,Y)$ has the {\bf L}$_{o, p}$, then so does $(Z,Y)$.
\end{itemize}
\end{theorem}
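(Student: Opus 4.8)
The plan is to transfer the Bishop-Phelps-Bollob\'as data back and forth between $X$ and $Z$ using the norm-one projection $\pi\colon X\to Z$ and its adjoint. Write $\iota\colon Z\hookrightarrow X$ for the inclusion, so that $\pi\circ\iota=\Id_Z$ and $\|\pi\|=1$. The key algebraic fact I would record first is that, for any Banach space $W$, the map $R\in\mathcal L(X,W)\mapsto R\circ\iota\in\mathcal L(Z,W)$ is a norm-one surjection with a norm-one right inverse $S\in\mathcal L(Z,W)\mapsto S\circ\pi\in\mathcal L(X,W)$; indeed $\|S\circ\pi\|\le\|S\|$ and $(S\circ\pi)\circ\iota=S$, so $\|S\circ\pi\|=\|S\|$. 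Dually, on points, $\iota$ embeds $S_Z$ isometrically into $S_X$ and $\pi$ maps $B_X$ onto $B_Z$; moreover for $z\in S_Z$ and $R\in\mathcal L(X,W)$ one has $R(\iota z)=(R\circ\iota)(z)$, so norm-attainment and near-norm-attainment are preserved in both directions along $\iota$.

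For part (i), suppose $(X,Y)$ has {\bf L}$_{p,o}$ and let $\e>0$ and $T\in\mathcal L(Z,Y)$ with $\|T\|=1$ be given. Consider $\widetilde T:=T\circ\pi\in\mathcal L(X,Y)$, which has $\|\widetilde T\|=1$; let $\eta(\e,\widetilde T)>0$ be the constant furnished by {\bf L}$_{p,o}$ for the pair $(X,Y)$ applied to $\widetilde T$, and set $\eta(\e,T):=\eta(\e,\widetilde T)$. If $z\in S_Z$ satisfies $\|T(z)\|>1-\eta(\e,T)$, then, viewing $z=\iota z\in S_X$, we get $\|\widetilde T(z)\|=\|T(\pi z)\|=\|T(z)\|>1-\eta(\e,\widetilde T)$, so there is $U\in\mathcal L(X,Y)$ with $\|U\|=1$, $\|U(z)\|=1$, and $\|U-\widetilde T\|<\e$. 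Now put $S:=U\circ\iota\in\mathcal L(Z,Y)$. Then $\|S\|\le 1$, $\|S(z)\|=\|U(\iota z)\|=\|U(z)\|=1$ (so in fact $\|S\|=1$), and $\|S-T\|=\|(U-\widetilde T)\circ\iota\|\le\|U-\widetilde T\|<\e$, because $U\circ\iota-\widetilde T\circ\iota=U\circ\iota-T\circ\pi\circ\iota=(U-\widetilde T)\circ\iota$. This is exactly the conclusion of {\bf L}$_{p,o}$ for $(Z,Y)$.

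For part (ii), suppose $(X,Y)$ has {\bf L}$_{o,p}$ and fix $\e>0$ and $z_0\in S_Z$. Apply {\bf L}$_{o,p}$ of $(X,Y)$ at the point $\iota z_0\in S_X$ to obtain $\eta(\e,\iota z_0)>0$, and set $\eta(\e,z_0):=\eta(\e,\iota z_0)$. Given $T\in\mathcal L(Z,Y)$ with $\|T\|=1$ and $\|T(z_0)\|>1-\eta(\e,z_0)$, form $\widetilde T:=T\circ\pi\in\mathcal L(X,Y)$ with $\|\widetilde T\|=1$ and $\|\widetilde T(\iota z_0)\|=\|T(z_0)\|>1-\eta(\e,\iota z_0)$; hence there is $x_0\in S_X$ with $\|\widetilde T(x_0)\|=1$ and $\|x_0-\iota z_0\|<\e$. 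The point to extract is $u_0:=\pi(x_0)\in B_Z$: then $\|T(u_0)\|=\|T(\pi x_0)\|=\|\widetilde T(x_0)\|=1$, which forces $\|u_0\|=1$ since $\|T\|=1$, and $\|u_0-z_0\|=\|\pi(x_0-\iota z_0)\|\le\|x_0-\iota z_0\|<\e$ using $\pi\iota z_0=z_0$ and $\|\pi\|=1$. So $u_0$ witnesses {\bf L}$_{o,p}$ for $(Z,Y)$.

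I do not anticipate a genuine obstacle here; the only mild subtlety worth double-checking is the norm identity $\|(U-\widetilde T)\circ\iota\|\le\|U-\widetilde T\|$ in part (i), which is immediate from $\|\iota\|=1$, together with the observation in part (ii) that applying $\pi$ cannot increase the distance between points while it exactly fixes elements of $Z$. Both directions hinge solely on $\|\pi\|=1$ and $\pi|_Z=\Id_Z$, and the argument is uniform in $Y$, so no hypothesis on $Y$ is needed.
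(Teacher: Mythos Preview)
Your proof is correct and follows essentially the same approach as the paper's: lift $T$ to $T\circ\pi$ on $X$, apply the hypothesis there, and then restrict (for operators) or project (for points) back to $Z$ using $\|\iota\|=\|\pi\|=1$. If anything, you are slightly more careful than the paper in part~(ii), where you explicitly note that $u_0=\pi(x_0)$ a priori lies only in $B_Z$ and that $\|T(u_0)\|=1$ forces $\|u_0\|=1$.
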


\begin{proof} We denote by $E$ and $P$ the canonical embedding and projection between $Z$ and $X$, respectively.

(i). Let $\e > 0$ and $T\in S_{\mathcal{L}(Z,Y)}$ be given. Assume that $z\in S_Z$ satisfy $\|T(z)\|>1-\eta(\e,T\circ P)$, where $\eta(\cdot,\cdot)$ is the function for the pair $(X, Y)$ having the {\bf L}$_{p, o}$. Since $\|(T\circ P)(E(z))\|=\|T(z)\|$ and $\|T\circ P\|=\|T\|$, there exists $S\in S_{\mathcal{L}(X,Y)}$ such that $\|S(E(z))\|=1$ and $\|S-T\circ P\|<\e$. Since $\|S\circ E-T\|\leq \|S-T\circ P\|$, we finish the proof.

\indent
(ii). Let $\e>0$ and $z\in S_Z$ be given. Assume that $T\in S_{\mathcal{L}(Z,Y)}$ satisfy $\|T(z)\| > 1 - \eta(\e,E(z))$, where $\eta(\cdot,\cdot)$ is the function for the pair $(X, Y)$ having the {\bf L}$_{o, p}$. Since $\|(T\circ P) (E(z))\|=\|T(z)\|$ and $\|T\circ P\|=\|T\|$, there exists $x\in S_X$ such that $\|x-E(z)\|<\e$ and $\|T\circ P (x)\|=1$. Since $\|P(x)-z\|\leq \|x-E(z)\|$, we finish the proof.
\end{proof}

\begin{prop}\label{func} Let $X$ and $Y$ be Banach spaces.
\begin{itemize}	
\item[(i)] If the pair $(X, Y)$ has the {\bf L}$_{o, p}$ for some Banach space $Y$, then so does $(X, \K)$.
\item[(ii)] If the pair $(X, Y)$ has the {\bf L}$_{p, o}$ for some Banach space $Y$, then so does $(X, \K)$.	
\end{itemize}
\end{prop}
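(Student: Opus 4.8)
The plan is to reduce both statements to rank-one operators. Fix once and for all a norm-one vector $y_0\in S_Y$ (its mere existence is all we use about $Y$), and to each functional $x^*\in S_{X^*}$ associate the operator $T_{x^*}\in\mathcal{L}(X,Y)$ given by $T_{x^*}(z)=x^*(z)\,y_0$. Then $\|T_{x^*}\|=\|x^*\|\,\|y_0\|=1$ and, crucially, $\|T_{x^*}(z)\|=|x^*(z)|$ for every $z\in X$, so $T_{x^*}$ and $x^*$ have exactly the same (almost) norm-attaining points. For (i), assume $(X,Y)$ has the {\bf L}$_{o,p}$ with function $\eta(\cdot,\cdot)$; given $\e>0$ and $x\in S_X$ I would simply take $\eta'(\e,x):=\eta(\e,x)$. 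If $x^*\in S_{X^*}$ satisfies $|x^*(x)|>1-\eta'(\e,x)$, then $\|T_{x^*}(x)\|>1-\eta(\e,x)$, so the {\bf L}$_{o,p}$ of $(X,Y)$ yields $x_0\in S_X$ with $\|x_0-x\|<\e$ and $\|T_{x^*}(x_0)\|=1$; since $\|T_{x^*}(x_0)\|=|x^*(x_0)|$, the functional $x^*$ attains its norm at $x_0$. This direction is immediate.

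For (ii), assume $(X,Y)$ has the {\bf L}$_{p,o}$ with function $\eta(\cdot,\cdot)$. Given $\e>0$ (which we may assume is $<1$) and $x^*\in S_{X^*}$, I would set $T:=T_{x^*}\in S_{\mathcal{L}(X,Y)}$ and $\eta'(\e,x^*):=\min\{\eta(\e/3,T),\,\e/3\}$. If $x\in S_X$ satisfies $|x^*(x)|>1-\eta'(\e,x^*)$, then $\|T(x)\|>1-\eta(\e/3,T)$, so there is $S\in S_{\mathcal{L}(X,Y)}$ with $\|S(x)\|=1$ and $\|S-T\|<\e/3$. To turn $S$ back into a functional, let $c$ be the unimodular scalar with $c\,x^*(x)=|x^*(x)|$ (well defined, since $|x^*(x)|>1-\e/3>0$), and choose, by the Hahn--Banach theorem, $y^*\in S_{Y^*}$ with $y^*(S(x))=\overline{c}$; then $z^*:=y^*\circ S\in X^*$ has $\|z^*\|\leq\|y^*\|\,\|S\|=1$ and $|z^*(x)|=|\overline{c}|=1$, so $z^*\in S_{X^*}$ and it attains its norm at $x$. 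For the remaining estimate, from $\|S(x)-x^*(x)\,y_0\|<\e/3$ I would multiply by $c$ to get $\|c\,S(x)-|x^*(x)|\,y_0\|<\e/3$, hence $\|c\,S(x)-y_0\|<\e/3+(1-|x^*(x)|)<2\e/3$; applying $y^*$ and using $y^*(c\,S(x))=c\,\overline{c}=1$ gives $|y^*(y_0)-1|<2\e/3$. Finally, for $z\in B_X$ one checks $(z^*-x^*)(z)=x^*(z)\,(y^*(y_0)-1)+y^*((S-T)(z))$, so $\|z^*-x^*\|\leq|y^*(y_0)-1|+\|S-T\|<2\e/3+\e/3=\e$, which is exactly what {\bf L}$_{p,o}$ of $(X,\K)$ requires.

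The only genuinely delicate point, and the reason I would carry the unimodular constant $c$ throughout, is that although $|x^*(x)|$ is forced close to $1$, the scalar $x^*(x)$ itself need not be (it could sit near $-1$, say), so $S(x)$ is close to $x^*(x)\,y_0$ rather than to $y_0$; prescribing the phase of the Hahn--Banach functional $y^*$ accordingly is precisely what makes $y^*(y_0)$ land near $1$. Beyond that, the argument is a routine triangle-inequality bookkeeping with a three-fold rescaling of $\e$.
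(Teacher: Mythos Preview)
Your proof is correct and follows essentially the same strategy as the paper: embed each $x^*\in S_{X^*}$ as the rank-one operator $z\mapsto x^*(z)y_0$, apply the hypothesis on $(X,Y)$, and in part~(ii) return to a functional by composing the new operator $S$ with a suitable Hahn--Banach functional $y^*\in S_{Y^*}$ supported at $S(x)$.

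The one place where you diverge slightly is in the bookkeeping of~(ii). The paper simply picks $y_0^*\in S_{Y^*}$ with $y_0^*(S(x_0))=1$, sets $x_1^*:=S^*y_0^*$, and asserts $\|x_1^*-x^*\|<\e$ directly. Strictly speaking this last inequality requires knowing that $y_0^*(y_0)$ is close to $1$, which in turn uses that the phase of $x^*(x_0)$ can be normalized and that $|x^*(x_0)|$ is close to $1$; the paper leaves this implicit (and the bound one actually gets is of order~$2\e$ rather than~$\e$). Your argument makes this step explicit by prescribing the phase of $y^*(S(x))$ via the unimodular constant $c$ and doing an honest $\e/3$ rescaling, so your version of~(ii) is in fact more carefully written than the paper's.
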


\begin{proof} (i). Let $\e>0$ and $x \in S_X$ be given. By hypothesis, there is $\eta(\e, x) > 0$ for the pair $(X, Y)$. Let $x^* \in X^*$ with $\|x^*\| = 1$ be such that $|x^*(x)| > 1 - \eta(\e, x)$. Define $T \in \mathcal{L}(X, Y)$ by $T(z) := x^*(z)y_0$ for $z \in X$ and for a fixed $y_0 \in S_Y$. Then, $\|T\| = \|x^*\| = 1$ and $\|T(x)\| = |x^*(x)| > 1 - \eta(\e, x)$. So, there is $x_0 \in S_X$ such that $\|T(x_0)\| = |x^*(x_0)| = 1$ and $\|x_0 - x\| < \e$. This proves that $(X, \K)$ has the {\bf L}$_{o, p}$.

(ii). Let $\e>0$ and $x^* \in X^*$ with $\|x^*\| = 1$ be given. Again, define $T(z) := x^*(z) y_0$ for $z \in X$ and for a fixed $y_0 \in S_Y$. Set $\eta(\e, x^*) := \eta(\e, T) > 0$. Let $x_0 \in S_X$ be such that $|x^*(x_0)| > 1 -\eta(\e, x^*)$. Then, $\|T(x_0)\| > 1 - \eta(\e, T)$. So, there is $S \in \mathcal{L}(X, Y)$ with $\|S\| = 1$ such that $\|S(x_0)\| = 1$ and $\|S - T\| < \e$. Let $y_0^* \in S_{Y^*}$ be such that $y_0^*(S(x_0)) = \|S(x_0)\| = 1$. Set $x_1^* := S^* y_0^* \in S_{X^*}$. Then, $|x_1^*(x_0)| = 1$ and $\|x_1^* - x^*\| < \e$. This means that the pair $(X, \K)$ has the {\bf L}$_{p, o}$.	
\end{proof} 

By Proposition~\ref{7}, we know that the pairs $(\ell_1^2,\mathbb{K})$ and $(\ell_\infty^2,\mathbb{K})$ fails both {\bf L}$_{o, p}$ and {\bf L}$_{p, o}$. So, as a consequence of Theorem~\ref{comple} and Proposition~\ref{func}, if $X$ has $\ell_1^2$ or $\ell_\infty^2$ as a one-complemented subspace, then the pair $(X,Y)$ cannot have neither {\bf L}$_{o, p}$ nor {\bf L}$_{p, o}$ for all Banach spaces $Y$. Hence, we have the following consequence.

\begin{cor} \label{l1c0} Let $Y$ be a Banach space. The pairs $(\ell_1, Y)$ and $(c_0, Y)$ fail both {\bf L}$_{o, p}$ and {\bf L}$_{p, o}$.
\end{cor}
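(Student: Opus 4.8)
The plan is to reduce the statement for $(\ell_1, Y)$ and $(c_0, Y)$ to the already-established failures of $(\ell_1^2, \mathbb{K})$ and $(\ell_\infty^2, \mathbb{K})$ (Proposition~\ref{7}), using the one-complemented subspace stability (Theorem~\ref{comple}) and the functional-reduction (Proposition~\ref{func}). Concretely, the chain of implications I would invoke is the one highlighted in the paragraph preceding the corollary: if a space $X$ contains an isometric copy of $\ell_1^2$ or of $\ell_\infty^2$ as a \emph{one-complemented} subspace $Z$, then by Theorem~\ref{comple} the pair $(Z,Y)$ inherits {\bf L}$_{p, o}$ (resp. {\bf L}$_{o, p}$) from $(X,Y)$ for every $Y$; by Proposition~\ref{func} the pair $(Z,\mathbb{K})$ then also has the corresponding property; but $(\ell_1^2,\mathbb{K})$ and $(\ell_\infty^2,\mathbb{K})$ fail both, a contradiction. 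So the whole task is to exhibit the right one-complemented $2$-dimensional subspace inside $\ell_1$ and inside $c_0$.

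First I would handle $\ell_1$. The span $Z=\spann\{e_1,e_2\}$ is isometrically $\ell_1^2$, and the map $P\colon \ell_1\to \ell_1$, $P((a_n)_n)=(a_1,a_2,0,0,\dots)$, is a norm-one projection onto $Z$ (indeed $\|P(a)\|_1 = |a_1|+|a_2| \le \sum_n |a_n| = \|a\|_1$). Hence $\ell_1^2$ is a one-complemented subspace of $\ell_1$. Next I would do $c_0$ analogously: $Z=\spann\{e_1,e_2\}\subset c_0$ is isometrically $\ell_\infty^2$, and the same coordinate-cutoff $P((a_n)_n)=(a_1,a_2,0,0,\dots)$ is a norm-one projection onto it, since $\|P(a)\|_\infty = \max\{|a_1|,|a_2|\} \le \sup_n|a_n| = \|a\|_\infty$. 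So $\ell_\infty^2$ is one-complemented in $c_0$. With these two observations in place, the contradiction argument of the previous paragraph applies verbatim to each of $(\ell_1,Y)$ and $(c_0,Y)$, and for both properties {\bf L}$_{o, p}$ and {\bf L}$_{p, o}$ simultaneously, since Proposition~\ref{7} rules out all four cases.

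I do not expect any genuine obstacle here: the corollary is essentially a bookkeeping consequence of the three preceding results, and the only content is spotting the (entirely standard) one-complemented $2$-dimensional subspaces. If anything, the one point worth stating carefully is that Theorem~\ref{comple} requires $Z$ to be the range of a norm-one projection — which is exactly what the coordinate cutoffs provide — and that the isometric identifications $\spann\{e_1,e_2\}\cong\ell_1^2$ in $\ell_1$ and $\spann\{e_1,e_2\}\cong\ell_\infty^2$ in $c_0$ are immediate from the definitions of the norms. Everything else is the composition
$$
(\ell_1,Y)\ \text{has {\bf L}}_{\bullet} \ \Longrightarrow\ (\ell_1^2,Y)\ \text{has {\bf L}}_{\bullet}\ \Longrightarrow\ (\ell_1^2,\mathbb{K})\ \text{has {\bf L}}_{\bullet},
$$
contradicting Proposition~\ref{7}, and likewise with $\ell_1$ replaced by $c_0$ and $\ell_1^2$ by $\ell_\infty^2$.
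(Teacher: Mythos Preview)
Your proposal is correct and follows exactly the same route as the paper: the paragraph preceding Corollary~\ref{l1c0} already spells out the chain Proposition~\ref{7} $+$ Theorem~\ref{comple} $+$ Proposition~\ref{func}, and the corollary is stated as an immediate consequence. You have simply made explicit the (standard) norm-one coordinate projections showing that $\ell_1^2$ and $\ell_\infty^2$ are one-complemented in $\ell_1$ and $c_0$, which the paper leaves to the reader.
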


We finish the paper by discussing some of the relations between the Bishop-Phelps-Bollob\'as properties we mentioned so far. There are two more of them we would like to consider that we did not discuss in the present article. They are the local versions of the {\bf BPBp}, which we denote by {\bf L}$_{\triangle}$, where $\triangle$ means that the $\eta$ depends on a fixed point $x$ or on a fixed operator $T$. A pair of Banach spaces $(X, Y)$ has the {\bf {\bf L}$_p$} if given $\e > 0$ and $x \in S_X$, then there is $\eta(\e, x) > 0$ such that whenever $T \in \mathcal{L}(X, Y)$ with $\|T\| = 1$ satisfies
$\|T(x)\| > 1 - \eta(\e, x)$, there are $S \in \mathcal{L}(X, Y)$ with $\|S\| = 1$ and $x_0 \in S_X$ such that
	\begin{equation} \label{bpb}
		\|S (x_0)\| = 1, \ \ \ \|x_0 - x\| < \e, \ \ \ \mbox{and} \ \ \ \|S - T\| < \e.
\end{equation}	
On the other hand, $(X, Y)$ has the {\bf {\bf L}$_o$} if given $\e > 0$ and $T \in S_{\mathcal{L}(X, Y)}$, there is $\eta(\e, T) > 0$ such that whenever $x \in S_X$ satisfies $\|T (x)\| > 1 - \eta(\e, T)$, there are $S \in \mathcal{L}(X, Y)$ with $\|S\| = 1$ and $x_0 \in S_X$ such that (\ref{bpb}) holds. For more information about these properties, we refer the reader to \cite[Section 3]{DKLM}. In the next remark we compare the properties we have considered.

\begin{rem} We have the following observations.
\begin{enumerate}
\item[(i)] All the implications below between the Bishop-Phelps-Bollob\'as properties hold.
\begin{center} 	
	\begin{tikzpicture}[scale=0.5]
	\node (0) at (0,-2){BPBpp};
	\node (1) at (5,0)  {\begin{tabular}{c}
		{{\bf L}$_{p, o}$}
		\end{tabular}};
	\node (2) at (5,-4){\begin{tabular}{c}
		{{\bf L}$_{p,p}$}
		\end{tabular}};
	\node (3) at (10,0) {\begin{tabular}{c}
		{{\bf L}$_{o}$}
		\end{tabular}};
	\node (4) at (10,-4) {{\bf L}$_{p}$};
	\node (5) at (15,0) {\begin{tabular}{c}
		{{\bf L}$_{o, o}$}
		\end{tabular}};
	\node (6) at (15,-4) {\begin{tabular}{c}
		{{\bf L}$_{o,p}$}
		\end{tabular}};
	\node (7) at (20,-2) {\begin{tabular}{c}
		{BPBop}
		\end{tabular}};
	\node (8) at (10,-2) {\begin{tabular}{c}
		{BPBp}
		\end{tabular}};	
	
	\draw [fletxa] (0) to node [] {$^{^{(1)}}$}  (1);
	\draw [fletxa] (0) to node [] {$^{^{(2)}}$}  (2);
	\draw [fletxa] (1) to node [] {$^{^{(4)}}$}  (3);
	\draw [fletxa] (2) to node [] {$^{^{(5)}}$}  (4);
	\draw [fletxa] (5) to node [] {$^{^{(8)}}$}  (3);
	\draw [fletxa] (6) to node [] {$^{^{(9)}}$}  (4);
	\draw [fletxa] (7)  to node [] {$^{^{(6)}}$}  (5);
	\draw [fletxa] (7)  to node [] {$^{^{(7)}}$}  (6);
	\draw [fletxa] (0) to node [] {$^{^{(3)}}$}  (8);
	\draw [fletxa] (7) to node [] {$^{^{(10)}}$}  (8);
	\draw [fletxa] (8)  to node [] {\,\,\,\,\,\,\, \tiny{(11)}}  (3);
	\draw [fletxa] (8)  to node [] {\,\,\,\,\,\,\, \tiny{(12)}}  (4);
	\end{tikzpicture}
\end{center}
On the other hand, the reverse implications are not true.

\item[(ii)] The {\bf BPBp} does not imply {\bf {\bf L}$_{\square, \triangle}$}, where $\square$ and $\triangle$ can be $p$ or $o$.

\item[(iii)] There is no relation between properties {\bf {\bf L}$_{o, p}$} and {\bf {\bf L}$_{p, o}$}.
\item[(iv)] The {\bf {\bf L}$_{p, p}$} does not imply the {\bf {\bf L}$_{p, o}$}, but we do not know whether the {\bf {\bf L}$_{p, o}$} implies (or not) the {\bf {\bf L}$_{p, p}$}.
\item[(v)] The {\bf {\bf L}$_{o, o}$} does not imply the {\bf {\bf L}$_{o, p}$}, but we do not know whether the {\bf {\bf L}$_{o, p}$} implies (or not) the {\bf {\bf L}$_{o, o}$}.
\end{enumerate}
\end{rem}

We briefly discuss the statements in the above remark. It is clear that all the implications in (i) are satisfied, so let us show that the reverse implications do not hold.  In \cite[Section~5]{DKLM} it is proved that the reverse implications of (2), (3), (5), (6), (8), (10), (11) and (12) do not hold. The reverse implication of (4)  (respectively (9)) fails since, for instance, the pairs $(\ell_1,\mathbb{K})$ or $(c_0,\mathbb{K})$ have the {\bf {\bf L}$_{o}$} (respectively {\bf {\bf L}$_{p}$}) but fail the {\bf {\bf L}$_{p, o}$} (respectively {\bf {\bf L}$_{o, p}$}). To show that the reverse implication of (7) fails, just take a pair $(X, \mathbb{K})$ with $X$ reflexive and LUR but not uniformly convex. Analogously (reasoning with $X^*$ instead of $X$) we see that the reverse implication of (1) does not hold. To see (ii), just note that $(X, \mathbb{K})$ has the {\bf BPBp} for every Banach space $X$, which is clearly not true for any of the properties {\bf {\bf L}$_{\square, \triangle}$}. For (iii), take $X$ a uniformly smooth Banach space with $\dim(X) \geq 2$. Then, we have that $(X, \ell_{\infty}^2)$ has the {\bf BPBpp} (see \cite[Proposition 2.4]{DKL}) and, consequently, the {\bf L}$_{p, o}$, but fails the {\bf L}$_{o, p}$ in virtue of Proposition~\ref{12}. This shows that the {\bf L}$_{p, o}$ does not imply the {\bf L}$_{o, p}$. For the converse, take any finite-dimensional space $X$ which is strictly convex but not smooth. Then, $X^*$ cannot be strictly convex and by Corollary \ref{18}.(ii), the pair $(X, \K)$ fails property {\bf L}$_{p, o}$. On the other hand, by using Proposition \ref{17}.(i), it satisfies property {\bf L}$_{o, p}$. So, the {\bf L}$_{o, p}$ cannot imply the {\bf L}$_{p, o}$. Finally, for (iv) and (v), notice that the {\bf {\bf L}$_{p, p}$} cannot imply the {\bf {\bf L}$_{p, o}$} since $(\ell_1^2, \K)$ has the {\bf {\bf L}$_{p, p}$} (see \cite[Proposition 2.9]{DKLM}) but fails the {\bf {\bf L}$_{p, o}$} (see Proposition \ref{7}). The same example shows that {\bf {\bf L}$_{o, o}$} does not imply the {\bf {\bf L}$_{o, p}$}.

\end{document}